\newtheorem{lemma}{Lemma}
\newtheorem{corollary}[lemma]{Corollary}
\newtheorem{proposition}[lemma]{Proposition}
\newtheorem{theorem}[lemma]{Theorem}
\newtheorem{conjecture}[lemma]{Conjecture}
\begin{document}

\title{Hausdorff operators on weighted mixed norm Fock spaces}

\author{Yongqing Liu\footnote{Corresponding author. \newline \indent \;\, E-mail: yongqingliu@cslg.edu.cn (Y. Liu).}\\
\footnotesize{School of Mathematics and Statistics, Changshu Institute of Technology,}\\
\footnotesize{Changshu, Jiangsu 215500, China}}

\date{}%2024-3-21
\maketitle

\noindent\textbf{Abstract} In this paper, we study Hausdorff operator $\mathcal{H}_\mu$ on a large class of weighted mixed norm Fock spaces $F_\phi^{p,q}$ for $1\leq p,q\leq\infty$. The boundedness and compactness of $\mathcal{H}_\mu$ on $F_\phi^{p,q}$ are characterized. As applications, we give when Hausdorff operator on $F_\phi^{p,q}$ is power bounded or uniformly mean ergodic.

\vspace{5pt}
\noindent\textbf{Keywords} Weighted mixed norm Fock spaces, Hausdorff operators, boundedness, compactness, power boundedness, uniformly mean ergodicity.

\vspace{5pt}
\noindent \textbf{Mathematics Subject Classification (2020)} 30H20, 47G10, 47A35.

\section{Introduction}

%There is an increasing interest in the study of boundedness of the Hausdorff operator on the real Hardy spaces, Lebesgue spaces (see \cite{CDFZ, Kamalakkannan, Liflyand}). Meanwhile, Hausdorff operators have attracted the attention of many authors who have studied them in various spaces of analytic functions (see \cite{Bonet, GS, HKQ, Stylogiannis}).

The investigation of Hausdorff operators has a long history. The papers \cite{GS} and \cite{Liflyand} provide a nice historical summary and survey on this topic, to which we refer. Let $\mathbb{C}$ be the complex plane and denote by $H(\mathbb{C})$ the space of entire functions. Following \cite{GS}, given a positive Borel measure $\mu$ on $(0,\infty)$, we consider formally the Hausdorff operator induced by the measure $\mu$ defined by
\begin{equation*}
\mathcal{H}_\mu f(z)=\int_{(0,\infty)} \frac{1}{t} f  \bigg(\frac{z}{t}\bigg) d\mu(t),   \quad   z\in \mathbb{C},    %\tag{$\clubsuit$}
\end{equation*}
where $f\in H(\mathbb{C})$. There are many classical operators in analysis which are special cases of the Hausdorff operator if one chooses suitable measure $\mu$, for instance, Flett's fractional integral \cite[p. 748]{Flett}. Indeed, let $\beta\in\mathbb{R}$ and $f(z)=\sum_{n=0}^\infty a_nz^n$ in $H(\mathbb{C})$. The fractional integral $\mathcal{I}^\beta$ of order $\beta$ is defined as
\begin{equation*}
\mathcal{I}^\beta f(z)=\sum_{n=0}^\infty (1+n)^{-\beta} a_nz^n.
\end{equation*}
For $\gamma>0$, consider the measure
\begin{equation*}
d\mu_\gamma(t)=\frac{\log^{\gamma-1}t}{t}\frac{dt}{\Gamma(\gamma)}
\end{equation*}
on $[1,\infty)$. Then, for $n\in\mathbb{N}_0$,
\begin{equation*}
\int_1^\infty \frac{1}{t^{n+1}}d\mu_\gamma(t)=(1+n)^{-\gamma},
\end{equation*}
and hence $\mathcal{H}_{\mu_\gamma} = \mathcal{I}^\gamma$. In particular, when $\gamma=1$, one obtains the Hardy operator
\begin{equation*}
Hf(z)=\frac{1}{z}\int_0^z f(w)dw.
\end{equation*}

Concrete operators on weighted mixed norm space $H_\omega^{p,q}$ on the unit disk, for example, Bergman projections, Carleson embeddings, and fractional derivatives, have been intensively studied for a long time (see \cite[Chapter 7]{JVA}, \cite[Chapter 3]{Pavlovic}, \cite{PRS, ZFGH} and the references therein). However, very little seems to be known about the behavior of operator on weighted mixed norm spaces of entire functions. Recently, in \cite{Liu}, we studied Fock projections on mixed norm spaces on $\mathbb{C}$. In this paper, we are mainly concerned with Hausdorff operator $\mathcal{H}_\mu$ on weighted mixed norm Fock spaces $F_\phi^{p,q}$.

Let $\phi:[0,\infty)\to \mathbb{R}^+$ be a continuously differentiable function. It is radial if $\phi(z)=\phi(|z|)$ for all $z\in\mathbb{C}$. We call $\phi$ a weight if the function $\phi$ is radial and $0<q<\infty$ such that $\int_0^\infty e^{-q\phi(r)}rdr<\infty$. For $0\leq r<\infty$ and $f\in H(\mathbb{C})$, set
\begin{equation*}
M_p(f,r)=\bigg(\frac{1}{2\pi}\int_0^{2\pi}|f(re^{i\theta})|^pd\theta\bigg)^\frac{1}{p}, \quad 0<p<\infty,
\end{equation*}
and
\begin{equation*}
M_\infty(f,r)=\sup_{|z|=r}|f(re^{i\theta})|.
\end{equation*}
It is well-known that $M_p(f,r)$ is increasing with $r\in [0,\infty)$. For $0<p\leq\infty$ and $0<q<\infty$, the weighted mixed norm Fock space $F_\phi^{p,q}$ consists of $f\in H(\mathbb{C})$ such that
\begin{equation*}
\|f\|_{p,q,\phi}^q=\int_0^\infty M_p^q(f,r)e^{-q\phi(r)}rdr.
\end{equation*}
For $0<p\leq\infty$, the weighted mixed norm Fock space $F_\phi^{p,\infty}$ consists of $f\in H(\mathbb{C})$ such that
\begin{equation*}
\|f\|_{p,\infty,\phi}=\sup_{r\in[0,\infty)} M_p(f,r)e^{-\phi(r)}.
\end{equation*}
Then $F_\phi^{p,q}$ is a Banach space for $1\leq p,q\leq\infty$, else $F_\phi^{p,q}$ is an $s$-Banach space with $s=\min\{p,q\}$ (see \cite{CHL}). If $p=q$, then $F_\phi^{p,q}$ is just the weighted Fock space $F_\phi^p$ (see \cite{BBB, CHLS, HHLS, SY}). As usual, $F_\alpha^p$ denotes the classical Fock space induced by the standard Gaussian weight $\phi(z)=\frac{\alpha}{2}|z|^2, \alpha>0$ (see \cite{Zhu2} for the theory of Fock spaces); $F^{p,q}_\alpha$ denotes the classical mixed norm Fock spaces (see \cite[Section 2.1]{FT}). Moreover, by \cite[Theorem 2.1]{Lusky}, the polynomials are dense whenever $1\leq p\leq \infty$ and $1\leq q<\infty$. In particular, $F_\phi^{p,q}$ is separable. %For $p\in(1,\infty)$ and $q\in[1,\infty)$, the monomials are a Schauder basis of $F_\phi^{p,q}$, but this is not the case in general for $p\in\{1,\infty\}$ (see \cite[Theorem 2.3]{Lusky}).
The weighted mixed norm Fock spaces $F_\phi^{p,q}$ arise naturally in operator theory and (random) analytic function spaces (see \cite{BG, FT, HCL, Liu}).

Let us briefly review the relevant works on this topic. First of all, the boundedness and compactness of $\mathcal{H}_\mu$ on $F_\alpha^p$ were characterized in \cite{GS}, furthermore, the authors \cite[Section 3.7]{BFR} showed that a bounded Hausdorff operator on $F_\alpha^2$ automatically defines an element in the Toeplitz algebra $\mathcal{T}$. In \cite[Section 2]{Bonet}, Bonet obtained the boundedness and compactness results for $\mathcal{H}_\mu$ on weighted Banach spaces of type $H^\infty$ (similar to $F_\phi^\infty$). Recently, Blasco and Galbis \cite{BG} characterized the boundedness and compactness of Hausdorff operators on mixed norm Fock spaces $F_\alpha^{p,q}$, in particular, on weighted spaces of type $H^\infty$ (see \cite[Section 7]{BG}).

Motivated by the works in \cite{Bonet} and \cite{BG}, we are interested in the case that Hauesdorff operator is bounded or compact on weighted mixed norm Fock spaces $F_\phi^{p,q}$. On the other hand, as an important example, the Hardy operator $H$ on different weighted Banach spaces of entire functions has been proven to be power bounded, uniformly mean ergodic, $\|H\|=1$, and $H^2$ is compact (see \cite[Section 3.3]{BBF}, \cite[Section 4]{Beltran}, \cite[Section 4]{BMW}). Inspired by these researches of Hardy operator, it is therefore of our second interest to answer the question of when Hausdorff operators are power bounded or uniformly mean ergodic on $F_\phi^{p,q}$.

We briefly discuss the classes of weights for the main results of this paper here. The class of weights $\mathcal{W}_p$ was introduced in \cite[Definition 1.1]{HCL} (see also \cite[Lemma 17]{CP}) to study a Littlewood-Paley formula for the weighted mixed norm Fock spaces. Assume that $\phi:[0,\infty)\to \mathbb{R}^+$ is twice continuously differentiable and there exists $r_0>0$ such that $\phi'(r)\neq0$ for $r>r_0$.
We recall that $\phi$ is in the class $\mathcal{W}_p$, $0<p<\infty$, if it satisfies the following conditions:
\begin{equation*}
\lim_{r\to\infty}\frac{re^{-p\phi(r)}}{\phi'(r)}=0,
\end{equation*}
\begin{equation*}
\limsup_{r\to\infty}\frac{1}{r}\bigg(\frac{r}{\phi'(r)}\bigg)'<p,
\end{equation*}
\begin{equation*}
\liminf_{r\to\infty}\frac{1}{r}\bigg(\frac{r}{\phi'(r)}\bigg)'>-\infty.
\end{equation*}

It is worth to point out that the class $\mathcal{W}_p$ is quite large, including functions whose growth ranges from logarithmic (e.g., $\phi(r)=a\ln(1+r)$, $ap>2$) to highly exponential (e.g., $\phi(r)=e^{e^r}$). In addition, we find that a weight $\phi$ satisfies
\begin{equation*}
\lim_{r\to\infty}\frac{1}{r}\bigg(\frac{r}{\phi'(r)}\bigg)'=0
\end{equation*}
if and only if
\begin{equation*}
\lim_{r\to\infty} r\phi'(r)=\infty,  \quad  \lim_{r\to\infty}\frac{\phi''(r)}{(\phi'(r))^2}=0.
\end{equation*}
In fact, a simple calculation shows that
\begin{equation*}
\frac{1}{r}\bigg(\frac{r}{\phi'(r)}\bigg)'=\frac{1}{r\phi'(r)}-\frac{\phi''(r)}{(\phi'(r))^2}.
\end{equation*}
This, together with \cite[Lemma 2.3]{CHL}, gives the conclusion.

The method of proof for the boundedness of Hausdorff operators is available in all cases, including the class $\mathcal{W}_p$. However, the method we use to prove the compactness of Hausdorff operator depends heavily on weights, we then restrict our weight $\phi$ to the class $\mathcal{C}$, which is a rather weak condition that is satisfied. Let $\phi:[0,\infty)\to \mathbb{R}^+$ is continuously differentiable. We say that a weight $\phi$ belongs to the class $\mathcal{C}$ if
\begin{equation*}
\lim_{r\to\infty} r\phi'(r)=\infty.
\end{equation*}
This class $\mathcal{C}$ includes the power functions $\phi(r)=r^l$ with $l>0$, $\phi(r)=(\ln r)^\alpha$ for $\alpha>1$, and exponential type functions, such as $\phi(r)=e^{\beta r}$ for $\beta>0$.

This paper is organized as follows. In Section \ref{section3}, we provide a characterization of when Hausdorff operator $\mathcal{H}_\mu$ is bounded on weighted mixed norm Fock spaces $F_\phi^{p,q}$ (Theorem \ref{boundedness}).
We also completely characterize the compactness of $\mathcal{H}_\mu$ on $F_\phi^{p,q}$, under the assumption of $\phi\in \mathcal{C}$ (Theorem \ref{compactness}).
As some applications, in Section \ref{section4}, we prove that Hauesdorff operator $\mathcal{H}_\mu$ is power bounded on $F_\phi^{p,q}$ if and only if it is a contraction (Theorem \ref{power bounded}). Then, we describe the uniformly mean ergodicity of Hauesdorff operator $\mathcal{H}_\mu$ on $F_\phi^{p,q}$ according to its boundedness and compactness, respectively (Theorem \ref{compact UME} and Theorem \ref{bounded UME}) .
Finally, in Section \ref{section5}, we discuss some problems related to this article.

Throughout this paper, we use $C$ to denote a positive number, which may vary from place to place. For two quantities $A$ and $B$, $A\lesssim B$ means that there exists a constant $C>0$, independent of the involved variables, such that $A\leq CB$, and we write $A\simeq B$ when $A\lesssim B$ and $B\lesssim A$. In what follows, $\mathbb{N}$ stands for the natural numbers and we set $\mathbb{N}_0=\mathbb{N}\cup\{0\}$.

\section{Boundedness and compactness of $\mathcal{H}_\mu$}\label{section3}

This section is devoted to characterize the boundedness and compactness of Hausdorff operators. Firstly, for $t>0$, the dilation operator $\mathcal{D}_t$ is defined by
\begin{equation*}
\mathcal{D}_tf(z)=f\bigg(\frac{z}{t}\bigg),  \quad  z\in\mathbb{C}.
\end{equation*}
Then Hausdorff operator $\mathcal{H}_\mu$ commutes with $\mathcal{D}_t$, that is, $\mathcal{D}_t \mathcal{H}_\mu=\mathcal{H}_\mu \mathcal{D}_t$. Moreover, the following proposition indicates that dilation operator $\mathcal{D}_t$ is bounded on weighted mixed norm Fock spaces.
\begin{proposition}\label{proposition}
Let $1\leq p,q \leq\infty$ and $\phi$ be a weight. The dilation operator $\mathcal{D}_t$ is bounded on $F_\phi^{p,q}$ if and only if $t\geq1$. Moreover, we have
\begin{equation*}
\|\mathcal{D}_t\|=1,  \quad  \forall t\geq 1.
\end{equation*}
\end{proposition}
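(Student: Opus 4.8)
The plan is to reduce everything to a single elementary identity for the integral means. Writing $z=re^{i\theta}$ one has $\mathcal{D}_tf(re^{i\theta})=f((r/t)e^{i\theta})$, so that
\begin{equation*}
M_p(\mathcal{D}_tf,r)=M_p(f,r/t),\qquad r\ge 0,
\end{equation*}
for every $0<p\le\infty$. This is the only structural fact I need; the rest follows by comparing radii and invoking the two properties already recorded in the excerpt, namely that $r\mapsto M_p(f,r)$ is increasing and that $\int_0^\infty e^{-q\phi(r)}r\,dr<\infty$.

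For the sufficiency and the norm estimate I would argue that for $t\ge 1$ and every $r\ge 0$ we have $r/t\le r$, whence $M_p(f,r/t)\le M_p(f,r)$ by monotonicity. Substituting this into the definition of the norm gives, in the case $q<\infty$,
\begin{equation*}
\|\mathcal{D}_tf\|_{p,q,\phi}^q=\int_0^\infty M_p^q(f,r/t)e^{-q\phi(r)}r\,dr\le\int_0^\infty M_p^q(f,r)e^{-q\phi(r)}r\,dr=\|f\|_{p,q,\phi}^q,
\end{equation*}
and the identical comparison under the supremum handles $q=\infty$. Hence $\mathcal{D}_t$ maps $F_\phi^{p,q}$ into itself with $\|\mathcal{D}_t\|\le 1$. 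To see that the norm is exactly $1$ I would test on the constant function $f\equiv 1$: it lies in $F_\phi^{p,q}$ since $\phi>0$ forces $e^{-\phi}\le 1$ and $\int_0^\infty e^{-q\phi(r)}r\,dr<\infty$, it is nonzero, and $\mathcal{D}_t 1=1$, so $\|\mathcal{D}_t\|\ge\|\mathcal{D}_t 1\|_{p,q,\phi}/\|1\|_{p,q,\phi}=1$. Combining the two bounds yields $\|\mathcal{D}_t\|=1$ for all $t\ge 1$.

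For the necessity I would show that $\mathcal{D}_t$ fails to be bounded when $0<t<1$ by exhibiting test functions with exploding norm ratios; the monomials $f_n(z)=z^n$ are the natural choice because they scale exactly. Since $M_p(z^n,r)=r^n$ for every $p$, homogeneity gives $M_p(\mathcal{D}_tz^n,r)=(r/t)^n=t^{-n}M_p(z^n,r)$, and therefore
\begin{equation*}
\|\mathcal{D}_tz^n\|_{p,q,\phi}=t^{-n}\|z^n\|_{p,q,\phi}
\end{equation*}
in both the $q<\infty$ and $q=\infty$ cases. As each $z^n$ belongs to $F_\phi^{p,q}$ (for $q<\infty$ this is immediate from the density of polynomials recorded above, and for $q=\infty$ it follows from the growth of $\phi$) and is nonzero, we obtain $\|\mathcal{D}_t\|\ge t^{-n}$ for every $n$; letting $n\to\infty$ with $t<1$ forces $\|\mathcal{D}_t\|=\infty$.

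The step I expect to require the most care is the necessity direction, and specifically the verification that the chosen test functions genuinely live in $F_\phi^{p,q}$: the exact scaling $\|\mathcal{D}_tz^n\|=t^{-n}\|z^n\|$ is forced by homogeneity and is robust, but it is only useful once one knows that infinitely many monomials have finite, positive norm, which is where the hypotheses on $\phi$ enter. The sufficiency, by contrast, is essentially immediate from the monotonicity of $M_p(f,\cdot)$ and needs no change of variables.
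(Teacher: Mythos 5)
Your treatment of the $t\geq 1$ direction and of the norm identity is exactly the paper's argument: the relation $M_p(\mathcal{D}_tf,r)=M_p(f,r/t)\leq M_p(f,r)$ (the paper writes this with an inequality), integration or taking suprema against the weight, and testing on $f\equiv 1$ to get $\|\mathcal{D}_t\|\geq 1$. Where you genuinely differ is that you also prove the necessity direction ($0<t<1$ implies $\mathcal{D}_t$ is unbounded): the paper's own proof omits this entirely, ending after the $t\geq 1$ case with ``This completes the proof.'' Your argument for it, the exact scaling $\|\mathcal{D}_t z^n\|_{p,q,\phi}=t^{-n}\|z^n\|_{p,q,\phi}$ followed by $n\to\infty$, is the natural one and is correct \emph{provided} monomials of arbitrarily large degree belong to $F_\phi^{p,q}$ with finite norm.

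That membership is the one real gap, and you correctly flag it as the delicate step, though your justification is thinner than it should be. For $1\leq q<\infty$ you can indeed lean on the paper's own assertion (via Lusky) that polynomials are dense, hence contained, in $F_\phi^{p,q}$. For $q=\infty$, ``it follows from the growth of $\phi$'' needs to be made precise: what is required is $\sup_{r>0} r^n e^{-\phi(r)}<\infty$ for every $n$, i.e.\ super-logarithmic growth of $\phi$, and this is \emph{not} implied by the paper's definition of a weight. For instance $\phi(r)=a\ln(1+r)$, which meets the stated integrability requirement when $aq>2$, admits only finitely many monomials in the space; since the coefficient estimate $|a_n|r^n\leq M_p(f,r)$ shows that $F_\phi^{p,q}$ is then exactly the (finite-dimensional) span of those monomials, every linear self-map, including $\mathcal{D}_t$ with $t<1$, is automatically bounded, and the ``only if'' assertion itself fails. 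So the residual defect is really in the proposition's hypotheses rather than in your reasoning: under any assumption forcing the space to contain monomials of all orders (e.g.\ $\phi(r)/\ln r\to\infty$, which holds for the classes $\mathcal{W}_p$ and $\mathcal{C}$ used elsewhere in the paper), your argument cleanly settles the direction that the paper leaves unproved; to make the proposal airtight as written, you should state that hypothesis explicitly instead of appealing to density or growth in passing.
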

\begin{proof}
For $t\geq1$, we have
\begin{equation*}
M_p(\mathcal{D}_tf,r)\leq M_p\bigg(f,\frac{r}{t}\bigg) \leq M_p(f,r).
\end{equation*}
From this, $\mathcal{D}_t$ is bounded on $F_\phi^{p,q}$ and $\|\mathcal{D}_t\|\leq1$. In particular, take $f(z)=1$, we obtain $\|\mathcal{D}_t\|\geq1$. This completes the proof.
\end{proof}

We now state the first main result in this section.
\begin{theorem}\label{boundedness}
Let $1\leq p,q \leq\infty$, $\phi$ be a weight, and $\mu$ be a positive Borel measure on $(0,\infty)$. Then the Hausdorff operator $\mathcal{H}_\mu$ is bounded on $F_\phi^{p,q}$ if and only if
\begin{equation*}
\mu(0,1)=0  \quad  \text{and}  \quad  \int_{(0,\infty)} \frac{1}{t}d\mu(t)<\infty.
\end{equation*}
In this case, its norm is given by
\begin{equation}\label{equation2}
\|\mathcal{H}_\mu\|=\int_{[1,\infty)}\frac{1}{t}d\mu(t).
\end{equation}
\end{theorem}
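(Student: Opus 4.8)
The plan is to exploit the representation $\mathcal{H}_\mu = \int_{(0,\infty)} t^{-1}\mathcal{D}_t\,d\mu(t)$ together with Proposition \ref{proposition}, which pins down each dilation factor by $\|\mathcal{D}_t\|=1$ for $t\geq 1$. I would prove the three assertions in turn: sufficiency with the upper norm bound, the matching lower bound, and necessity.

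For sufficiency, assume $\mu(0,1)=0$ and $\int_{(0,\infty)} t^{-1}\,d\mu(t)<\infty$. For each fixed $r$ I would apply Minkowski's integral inequality to move the $L^p(d\theta)$ norm on the circle $|z|=r$ (the supremum when $p=\infty$) inside the $d\mu$ integral, which gives $M_p(\mathcal{H}_\mu f,r)\leq \int_{[1,\infty)} t^{-1}M_p(f,r/t)\,d\mu(t)$. Since $\mu$ is supported on $[1,\infty)$ and $M_p(f,\cdot)$ is increasing, $M_p(f,r/t)\leq M_p(f,r)$, so $M_p(\mathcal{H}_\mu f,r)\leq M_p(f,r)\int_{[1,\infty)} t^{-1}\,d\mu(t)$; taking the $L^q(e^{-q\phi(r)}r\,dr)$ norm, or the weighted supremum when $q=\infty$, yields $\|\mathcal{H}_\mu f\|_{p,q,\phi}\leq \|f\|_{p,q,\phi}\int_{[1,\infty)} t^{-1}\,d\mu(t)$. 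Before this estimate I would note that the bound $\int_{[1,\infty)} t^{-1}|f(z/t)|\,d\mu(t)\leq M_\infty(f,|z|)\int_{[1,\infty)} t^{-1}\,d\mu(t)<\infty$ guarantees absolute convergence locally uniformly, so $\mathcal{H}_\mu f$ is a genuine entire function. This establishes boundedness and the upper bound $\|\mathcal{H}_\mu\|\leq \int_{[1,\infty)} t^{-1}\,d\mu(t)$.

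For the reverse norm inequality I would simply test on $f\equiv 1$: as $\mu$ is supported on $[1,\infty)$, $\mathcal{H}_\mu 1=\int_{[1,\infty)} t^{-1}\,d\mu(t)$ is a constant, whence $\|\mathcal{H}_\mu 1\|_{p,q,\phi}/\|1\|_{p,q,\phi}=\int_{[1,\infty)} t^{-1}\,d\mu(t)$, forcing $\|\mathcal{H}_\mu\|\geq \int_{[1,\infty)} t^{-1}\,d\mu(t)$ and hence the equality \eqref{equation2}. For necessity the central observation is that $\mathcal{H}_\mu$ acts diagonally on monomials, $\mathcal{H}_\mu z^n=c_n z^n$ with $c_n=\int_{(0,\infty)} t^{-(n+1)}\,d\mu(t)$. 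Since the monomials lie in $F_\phi^{p,q}$, boundedness forces each $c_n$ to be finite and $c_n=\|\mathcal{H}_\mu z^n\|_{p,q,\phi}/\|z^n\|_{p,q,\phi}\leq \|\mathcal{H}_\mu\|$ for all $n$. Taking $n=0$ gives $\int_{(0,\infty)} t^{-1}\,d\mu(t)<\infty$. To get $\mu(0,1)=0$ I argue by contradiction: if $\mu(0,1)>0$, then $\mu\big((0,\rho)\big)>0$ for some $\rho\in(0,1)$, and restricting the defining integral to $(0,\rho)$ gives $c_n\geq \rho^{-(n+1)}\mu\big((0,\rho)\big)\to\infty$, contradicting $\sup_n c_n\leq \|\mathcal{H}_\mu\|<\infty$.

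The main obstacle I anticipate is twofold. First, I must justify cleanly, and uniformly across the endpoint cases $p=\infty$ and $q=\infty$, that the formal integral defines a well-defined entire-function-valued bounded operator; here the correct form of Minkowski's inequality and the supremum interpretations of $M_\infty$ and of the $q=\infty$ norm have to be handled consistently. Second, the necessity of $\mu(0,1)=0$ rests entirely on the diagonal action on monomials and on the divergence $\rho^{-(n+1)}\to\infty$ for $\rho<1$; the delicate point is confirming that the monomials genuinely belong to $F_\phi^{p,q}$ so that this blow-up is truly incompatible with boundedness.
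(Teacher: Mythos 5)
Your proof is correct, and its skeleton coincides with the paper's: sufficiency and the upper bound $\|\mathcal{H}_\mu\|\leq\int_{[1,\infty)}t^{-1}\,d\mu(t)$ come from Minkowski's integral inequality plus the monotonicity of $M_p(f,\cdot)$, while necessity and the matching lower bound come from the diagonal action $\mathcal{H}_\mu z^n=\mu_n z^n$ on monomials (your test on $f\equiv 1$ is exactly the case $n=0$, which is also what drives the paper's norm equality, since $\mu_n$ is maximal at $n=0$ once $\mu(0,1)=0$). The one genuine difference is how $\mu(0,1)=0$ is extracted from $\sup_n\mu_n\leq\|\mathcal{H}_\mu\|<\infty$: the paper invokes its Lemma \ref{lemma2}, whose proof interprets $\sqrt[n]{\mu_n}$ as the $L^n$-norm of $1/t$ with respect to the finite measure $d\nu=t^{-1}d\mu$, uses that these norms increase in $n$, and then cites \cite[Lemma 4.1]{BG}; you argue by direct contradiction, observing that $\mu(0,1)>0$ forces $\mu\big((0,\rho)\big)>0$ for some $\rho<1$ and hence $\mu_n\geq\rho^{-(n+1)}\mu\big((0,\rho)\big)\to\infty$. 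Your route is more elementary and self-contained — it eliminates the dependence on the external lemma from \cite{BG} while capturing the same blow-up phenomenon — and your explicit remark that the defining integral converges locally uniformly, so that $\mathcal{H}_\mu f$ is genuinely entire, is a point the paper passes over in silence. The ``delicate point'' you flag, namely that $z^n\in F_\phi^{p,q}$, is equally implicit in the paper's proof (which divides by $\|z^n\|_{p,q,\phi}$), so you are no worse off there; note only that this membership is a real restriction on the weight (it can fail for slowly growing, e.g.\ logarithmic, weights), though it holds for the classes such as $\mathcal{C}$ and $\mathcal{W}_p$ that the paper has in view.
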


For the proof of this theorem, we require the following lemma (see \cite[Theorem 4.3]{BG}, \cite[p. 3028]{GS}). Here, we include a sketch of its proof for the sake of completeness. For $n\in\mathbb{N}_0$, we denote $\mu_n=\int_{(0,\infty)}t^{-(1+n)}d\mu(t)$.
\begin{lemma}\label{lemma2}
Let $\mu$ be a positive Borel measure on $(0,\infty)$. Then
\begin{equation*}
\sup_{n\in\mathbb{N}_0}\mu_n=\sup_{n\in\mathbb{N}_0}\int_{(0,\infty)}t^{-(1+n)}d\mu(t)<\infty
\end{equation*}
if and only if
\begin{equation*}
\mu(0,1)=0 \quad  \text{and}  \quad  \int_{[1,\infty)} \frac{1}{t}d\mu(t)<\infty.
\end{equation*}
\end{lemma}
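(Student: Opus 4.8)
The plan is to prove the equivalence by establishing both directions, treating $\mu_n=\int_{(0,\infty)}t^{-(1+n)}d\mu(t)$ as a sequence indexed by $n\in\mathbb{N}_0$ and exploiting its monotonicity. The central observation is that the integrand $t^{-(1+n)}$ is monotone in $t$ for fixed $n$: on the interval $(0,1)$ we have $t^{-(1+n)}\to\infty$ as $n\to\infty$, while on $[1,\infty)$ we have $t^{-(1+n)}$ decreasing in $n$ and bounded above by $t^{-1}$. This dichotomy at $t=1$ is exactly what separates the two parts of the right-hand condition, so I would split every integral as $\int_{(0,\infty)}=\int_{(0,1)}+\int_{[1,\infty)}$ from the outset.

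For the direction assuming $\mu(0,1)=0$ and $\int_{[1,\infty)}t^{-1}d\mu(t)<\infty$, the argument is immediate: since $\mu$ gives no mass to $(0,1)$, we have $\mu_n=\int_{[1,\infty)}t^{-(1+n)}d\mu(t)\leq\int_{[1,\infty)}t^{-1}d\mu(t)$ for every $n\in\mathbb{N}_0$, because $t^{-(1+n)}\leq t^{-1}$ whenever $t\geq1$. Hence $\sup_n\mu_n$ is finite, bounded by the (finite) quantity $\mu_0=\int_{[1,\infty)}t^{-1}d\mu(t)$. This half requires no delicate estimates.

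For the converse, assume $S:=\sup_{n\in\mathbb{N}_0}\mu_n<\infty$. First I would show $\mu(0,1)=0$. The idea is that concentrated mass near any point of $(0,1)$ forces $\mu_n$ to blow up: for any $\delta\in(0,1)$, on the set $(0,\delta]$ we have $t^{-(1+n)}\geq\delta^{-(1+n)}$, so $\mu_n\geq\delta^{-(1+n)}\mu(0,\delta]$. Since $\delta^{-(1+n)}\to\infty$ as $n\to\infty$, the boundedness of $\mu_n$ forces $\mu(0,\delta]=0$, and letting $\delta\uparrow1$ through a sequence gives $\mu(0,1)=0$. Next, to obtain $\int_{[1,\infty)}t^{-1}d\mu(t)<\infty$, I would note that on $[1,\infty)$ the functions $t^{-(1+n)}$ increase monotonically to $t^{-1}$ as $n\to0^+$ through $n=0$ — more precisely, since $t^{-1}=t^{-(1+0)}$, the case $n=0$ already gives the integral directly: $\int_{[1,\infty)}t^{-1}d\mu(t)=\mu_0\leq S<\infty$. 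Combined with $\mu(0,1)=0$, this completes the converse.

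The main obstacle, such as it is, lies in the first step of the converse: rigorously converting the pointwise blow-up of $t^{-(1+n)}$ on $(0,1)$ into the conclusion $\mu(0,1)=0$. One must be careful to argue through a sequence of shrinking intervals $(0,1-1/k]$ and invoke continuity of measure from below, rather than trying to handle the full open interval $(0,1)$ at once where the integrand is unbounded. I would handle this cleanly by fixing $\delta<1$, bounding $\mu(0,\delta]\leq\delta^{1+n}S$, sending $n\to\infty$ to get $\mu(0,\delta]=0$, and then taking $\delta\uparrow1$.
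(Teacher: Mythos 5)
Your proof is correct, but your converse takes a genuinely different route from the paper's. The forward direction is the same in both (and essentially forced): with $\mu(0,1)=0$ one bounds $t^{-(1+n)}\leq t^{-1}$ on $[1,\infty)$ and gets $\sup_n\mu_n\leq\mu_0<\infty$. For the converse, the paper introduces the measure $d\nu(t)=t^{-1}d\mu(t)$ (finite since $\mu_0\leq\sup_n\mu_n<\infty$), observes that $\sqrt[n]{\mu_n}=\|\varphi\|_{L^n(d\nu)}$ with $\varphi(t)=1/t$, deduces from $\sup_n\mu_n<\infty$ that $\lim_{n\to\infty}\sqrt[n]{\mu_n}\leq1$, and then invokes a cited lemma of Blasco--Galbis to conclude $\mu(0,1)=0$; in effect it runs a root-test/$L^n$-norm-convergence argument. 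You instead argue directly: the Chebyshev-type bound $\mu_n\geq\delta^{-(1+n)}\mu((0,\delta])$ for $\delta\in(0,1)$ forces $\mu((0,\delta])\leq\delta^{1+n}S\to0$, hence $\mu((0,\delta])=0$, and continuity of measure from below along the sets $(0,1-1/k]$ gives $\mu(0,1)=0$; the integral condition is then just $\mu_0\leq S$. Your argument is more elementary and self-contained: it avoids both the external reference and the $L^n$-norm machinery (whose monotonicity claim for a finite, not necessarily probability, measure requires some care as stated in the paper), whereas the paper's approach has the advantage of tying the lemma to the $L^p$/root-test framework used elsewhere in the Hausdorff-operator literature. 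One cosmetic remark: your sentence about $t^{-(1+n)}$ ``increasing monotonically to $t^{-1}$ as $n\to0^+$'' is garbled, but the correction you immediately give (the case $n=0$ yields the integral directly) is exactly right, so nothing is lost.
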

\begin{proof}
If $\mu(0,1)=0$, then
\begin{equation*}
\mu_n=\int_{[1,\infty)} \frac{1}{t^{n+1}}d\mu(t) \leq \int_{[1,\infty)} \frac{1}{t}d\mu(t)<\infty.
\end{equation*}
Therefore, we get $\sup_{n\in\mathbb{N}_0}\mu_n<\infty$.

Conversely, set $\varphi(t)=\frac{1}{t}$. Since $d\nu(t)=\frac{1}{t}d\mu(t)$ is a finite measure on $(0,\infty)$ and $\sqrt[n]{\mu_n}=\|\varphi\|_{L^n(d\nu)}$, the sequence $\{\sqrt[n]{\mu_n}\}_{n=0}^\infty$ is increasing. Therefore, the condition
\begin{equation*}
\sup_{n\in\mathbb{N}_0}\mu_n<\infty
\end{equation*}
gives that
\begin{equation*}
\sup_{n\in\mathbb{N}_0}\sqrt[n]{\mu_n}=\lim_{n\to\infty}\sqrt[n]{\mu_n} \leq 1.
\end{equation*}
By \cite[Lemma 4.1]{BG}, we get $\mu(0,1)=0$.
\end{proof}

Now, we come to the proof of Theorem \ref{boundedness}.
\begin{proof}[Proof of Theorem \ref{boundedness}]
Since $\mu(0,1)=0$, we have
\begin{equation*}
\mathcal{H}_\mu f(z)=\int_1^\infty \frac{1}{t} f \bigg(\frac{z}{t}\bigg) d\mu(t).
\end{equation*}
For $1\leq p\leq\infty$, by Minkowski's integral inequality, we get
\begin{equation*}
M_p(\mathcal{H}_\mu f,r) \leq \int_1^\infty  \frac{1}{t} M_p\bigg(f,\frac{r}{t}\bigg) d\mu(t) \leq \bigg(\int_1^\infty\frac{1}{t}d\mu(t)\bigg) M_p(f,r).
\end{equation*}
Multiplying both sides by $re^{-q\phi(r)}$ and integrating over $r$ from $0$ to $\infty$ yields that $\mathcal{H}_\mu$ is bounded on $F_\phi^{p,q}$, and we have
\begin{equation}\label{equation1}
\|\mathcal{H}_\mu\| \leq \int_{[1,\infty)}\frac{1}{t}d\mu(t).
\end{equation}

Conversely, if the operator $\mathcal{H}_\mu$ is bounded on $F_\phi^{p,q}$, take $h_n(z)=z^n$, $n\in\mathbb{N}_0$, then
\begin{equation*}
\mathcal{H}_\mu h_n(z)=\int_0^\infty \frac{1}{t^{n+1}} d\mu(t) h_n.
\end{equation*}
Moreover, for each $n\in\mathbb{N}_0$,
\begin{equation*}
\int_0^\infty \frac{1}{t^{n+1}} d\mu(t)\|h_n\|_{p,q,\phi} \leq \|\mathcal{H}_\mu\| \|h_n\|_{p,q,\phi}.
\end{equation*}
This implies
\begin{equation*}
\sup_{n\in\mathbb{N}_0}\int_0^\infty \frac{1}{t^{n+1}} d\mu(t) \leq \|\mathcal{H}_\mu\|.
\end{equation*}
Then Lemma \ref{lemma2} implies that $\mu(0,1)=0$. On the one hand, we have that
\begin{equation*}
\int_{[1,\infty)}\frac{1}{t}d\mu(t)=\sup_{n\in\mathbb{N}_0}\int_{[1,\infty)} \frac{1}{t^{n+1}}d\mu(t)<\infty.
\end{equation*}
This, together with (\ref{equation1}), gives equality (\ref{equation2}) of the norm and the integral. The proof is complete.
\end{proof}

The following conclusion indicates that $\mathcal{H}_\mu$ acts as a certain coefficient multiplier on $F_\phi^{p,q}$.
\begin{corollary}
Let $\mu$ be a positive Borel measure on $(0,\infty)$. If
\begin{equation*}
 \sup_{n\in\mathbb{N}_0}\mu_n=\sup_{n\in\mathbb{N}_0}\int_{(0,\infty)} \frac{1}{t^{n+1}}d\mu(t)<\infty,
\end{equation*}
then for every entire function
\begin{equation*}
f(z)=\sum_{n=0}^\infty a_n z^n \in F_\phi^{p,q},
\end{equation*}
we have
\begin{equation*}
\mathcal{H}_\mu f(z)=\sum_{n=0}^\infty \mu_n a_n z^n \in F_\phi^{p,q}.
\end{equation*}
\end{corollary}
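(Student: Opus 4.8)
The plan is to reduce the claim to the boundedness theorem already proved together with a single termwise interchange. First I would observe that the hypothesis $\sup_{n\in\mathbb{N}_0}\mu_n<\infty$ is, by Lemma \ref{lemma2}, equivalent to $\mu(0,1)=0$ together with $\int_{[1,\infty)}\frac{1}{t}d\mu(t)<\infty$. By Theorem \ref{boundedness}, $\mathcal{H}_\mu$ is therefore bounded on $F_\phi^{p,q}$, so in particular $\mathcal{H}_\mu f\in F_\phi^{p,q}$ for every $f\in F_\phi^{p,q}$. Thus only the identification of the Taylor coefficients of $\mathcal{H}_\mu f$ remains to be done.

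Since $\mu(0,1)=0$, for $f(z)=\sum_{n=0}^\infty a_nz^n$ I would write
\begin{equation*}
\mathcal{H}_\mu f(z)=\int_{[1,\infty)}\frac{1}{t}\sum_{n=0}^\infty a_n\frac{z^n}{t^n}d\mu(t),
\end{equation*}
and the crux of the argument is to interchange the summation with the integration. To justify this via Fubini's theorem, I would introduce the auxiliary entire function $g(w)=\sum_{n=0}^\infty|a_n|w^n$, which again has infinite radius of convergence. For fixed $z$ with $|z|=r$ and for $t\geq1$ one has $r/t\leq r$, so that the monotonicity of $g$ on $[0,\infty)$ (its coefficients being nonnegative) yields the domination
\begin{equation*}
\int_{[1,\infty)}\frac{1}{t}\sum_{n=0}^\infty|a_n|\frac{r^n}{t^n}d\mu(t)\leq g(r)\int_{[1,\infty)}\frac{1}{t}d\mu(t)<\infty.
\end{equation*}

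With this absolute-convergence bound in hand, Fubini's theorem applies and produces
\begin{equation*}
\mathcal{H}_\mu f(z)=\sum_{n=0}^\infty a_nz^n\int_{[1,\infty)}\frac{1}{t^{n+1}}d\mu(t)=\sum_{n=0}^\infty \mu_na_nz^n,
\end{equation*}
which is exactly the coefficient-multiplier representation asserted. The only delicate point is the application of Fubini, and this is precisely what the estimate $g(r)\int_{[1,\infty)}t^{-1}d\mu(t)<\infty$ secures; the rest is an immediate consequence of Theorem \ref{boundedness} and of the monomial action $\mathcal{H}_\mu(z^n)=\mu_nz^n$ already computed in its proof.
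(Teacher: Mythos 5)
Your proof is correct. The paper actually states this corollary without any proof, treating it as an immediate consequence of Theorem \ref{boundedness}; your argument supplies exactly the reasoning the paper leaves implicit: Lemma \ref{lemma2} converts the hypothesis $\sup_n\mu_n<\infty$ into $\mu(0,1)=0$ and $\int_{[1,\infty)}t^{-1}d\mu(t)<\infty$, Theorem \ref{boundedness} then gives $\mathcal{H}_\mu f\in F_\phi^{p,q}$, and the only substantive point remaining is the interchange of sum and integral. Your Fubini justification is sound: for $t\geq 1$ one has $\sum_n|a_n|r^n t^{-n-1}\leq t^{-1}g(r)$ with $g(w)=\sum_n|a_n|w^n$ entire, so the iterated integral of absolute values is finite and the termwise identity $\mathcal{H}_\mu f=\sum_n\mu_n a_n z^n$ follows; this is precisely the detail a careful reader would want, and it is the natural (indeed the intended) route.
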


%\begin{remark}
%As pointed out in \cite{Bonet}, not all multipliers with $\{\gamma_n\}_{n=0}^\infty\in l^\infty$ define a bounded operator on $F_\phi^{p,q}$.
%\end{remark}

%A linear operator from one norm linear space to another norm linear space is compact if the image of the unit ball under the operator has compact closure.

The second theorem of this section deals with the problem of compactness of $\mathcal{H}_\mu$.
\begin{theorem}\label{compactness}
Let $1\leq p,q\leq\infty$, $\phi\in\mathcal{C}$, and $\lim\limits_{n\to\infty} \|z^n\|_{p,q,\phi}^{1/n}=\infty$. Let $\mu$ be a positive Borel measure on $(0,\infty)$. Then the Hausdorff operator $\mathcal{H}_\mu$ is compact on $F_\phi^{p,q}$ if and only if
\begin{equation*}
\mu((0,1])=0 \quad and \quad \int_{(0,\infty)}\frac{1}{t}d\mu(t)<\infty.
\end{equation*}
\end{theorem}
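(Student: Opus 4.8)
The plan is to combine the boundedness characterization of Theorem \ref{boundedness} with the standard compactness criterion for operators on spaces of entire functions: an operator $T$ on $F_\phi^{p,q}$ is compact if and only if $\|Tf_k\|_{p,q,\phi}\to0$ whenever $(f_k)$ is norm-bounded and $f_k\to0$ uniformly on compact subsets of $\mathbb{C}$. Both implications rest on two standard facts about Fock-type spaces, namely that the point evaluations are bounded on $F_\phi^{p,q}$ (so norm convergence forces locally uniform convergence) and that norm-bounded families are normal in $H(\mathbb{C})$ with a Fatou-type lower semicontinuity of the norm. Recall also that $\mathcal{H}_\mu$ is diagonal on monomials, $\mathcal{H}_\mu z^n=\mu_n z^n$ with $\mu_n=\int_{(0,\infty)}t^{-(1+n)}d\mu(t)$, and that when $\mu((0,1))=0$ one has $\mu_n=\int_{[1,\infty)}t^{-(1+n)}d\mu(t)\to\mu(\{1\})$ by dominated convergence (the integrand is bounded by $t^{-1}\in L^1(d\mu)$ and tends pointwise to the indicator of $\{1\}$). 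Thus the extra endpoint condition $\mu(\{1\})=0$ is exactly the statement $\mu_n\to0$.

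For necessity, compactness forces boundedness, so Theorem \ref{boundedness} already gives $\mu((0,1))=0$ and $\int t^{-1}d\mu<\infty$. Set $e_n=z^n/\|z^n\|_{p,q,\phi}$, so $\|e_n\|_{p,q,\phi}=1$. The hypothesis $\|z^n\|_{p,q,\phi}^{1/n}\to\infty$ yields, for every $\rho>0$ and every $A>\rho$, the bound $|e_n(z)|\le(\rho/A)^n$ for $|z|\le\rho$ and $n$ large, whence $e_n\to0$ uniformly on compact sets. Since $\mathcal{H}_\mu e_n=\mu_n e_n$ and $\mathcal{H}_\mu$ is compact, the criterion (only-if direction) gives $\|\mathcal{H}_\mu e_n\|_{p,q,\phi}=\mu_n\to0$; by the previous paragraph this means $\mu(\{1\})=0$, i.e. $\mu((0,1])=0$.

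For sufficiency, the heart of the matter, and the only step where $\phi\in\mathcal{C}$ is used, is to show that the dilation $\mathcal{D}_t$ is compact for each fixed $t>1$. Given $(f_k)$ norm-bounded with $f_k\to0$ locally uniformly, the substitution $r=ts$ gives, for $q<\infty$,
\[
\|\mathcal{D}_tf_k\|_{p,q,\phi}^q=t^2\int_0^\infty M_p^q(f_k,s)\,e^{-q\phi(s)}\,s\,e^{-q(\phi(ts)-\phi(s))}\,ds,
\]
and I would split the integral at a radius $R$. On $[0,R]$ the factor $M_p(f_k,s)\le\sup_{|z|\le R}|f_k(z)|\to0$ disposes of that piece for each fixed $R$; on $(R,\infty)$ the bracket $M_p^q(f_k,s)e^{-q\phi(s)}s$ integrates to at most $\|f_k\|_{p,q,\phi}^q$, so this piece is controlled by $\sup_{s>R}e^{-q(\phi(ts)-\phi(s))}$ times a bounded constant. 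Here $\phi\in\mathcal{C}$ enters: since $r\phi'(r)\to\infty$, for any $K$ one has $\phi'(u)\ge K/u$ for large $u$, whence $\phi(ts)-\phi(s)=\int_s^{ts}\phi'(u)\,du\ge K\log t\to\infty$ as $s\to\infty$, so that supremum tends to $0$ as $R\to\infty$. Choosing $R$ and then $k$ gives $\|\mathcal{D}_tf_k\|_{p,q,\phi}\to0$; the case $q=\infty$ is identical with the integral replaced by a supremum.

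Finally I assemble $\mathcal{H}_\mu$ from the $\mathcal{D}_t$. Since $\mu((0,1])=0$, the same Minkowski estimate as in the proof of Theorem \ref{boundedness} yields $\|\mathcal{H}_\mu f_k\|_{p,q,\phi}\le\int_{(1,\infty)}t^{-1}\|\mathcal{D}_tf_k\|_{p,q,\phi}\,d\mu(t)$ for every bounded, locally-uniformly-null $(f_k)$. For each fixed $t>1$ the integrand tends to $0$ by the dilation estimate just proved, and it is dominated by $t^{-1}\sup_k\|f_k\|_{p,q,\phi}\in L^1(d\mu)$ because $\int t^{-1}d\mu<\infty$ and $\|\mathcal{D}_t\|=1$ by Proposition \ref{proposition}. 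Dominated convergence then forces $\|\mathcal{H}_\mu f_k\|_{p,q,\phi}\to0$, so $\mathcal{H}_\mu$ is compact by the criterion. The main obstacle is precisely the dilation estimate: turning $\phi\in\mathcal{C}$ into the tail smallness $\sup_{s>R}e^{-q(\phi(ts)-\phi(s))}\to0$ is the weight-dependent ingredient flagged in the introduction, while everything surrounding it is routine bookkeeping with Minkowski's inequality and dominated convergence.
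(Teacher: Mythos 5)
Your proof is correct, and its necessity half is essentially the paper's own argument: normalized monomials $z^n/\|z^n\|_{p,q,\phi}$ tend to zero locally uniformly (this is exactly where the hypothesis $\|z^n\|_{p,q,\phi}^{1/n}\to\infty$ is used), $\mathcal{H}_\mu$ acts diagonally on them, and $\mu_n\to\mu(\{1\})$ by dominated convergence (the paper's Lemma \ref{lemma4}). The sufficiency half, however, takes a genuinely different route. The paper truncates: for $\delta>1$ it considers $\mathcal{H}_\mu^\delta f=\int_{(\delta,\infty)}t^{-1}f(\cdot/t)\,d\mu(t)$ together with the auxiliary weight $\varphi(z)=\phi(z/\delta)$, proves that $\mathcal{H}_\mu^\delta\colon F_\phi^{p,q}\to F_\varphi^{p,q}$ is bounded (via dilation estimates uniform in $t\geq\delta$) and that the inclusion $F_\varphi^{p,q}\hookrightarrow F_\phi^{p,q}$ is compact, so each $\mathcal{H}_\mu^\delta$ is compact; it then lets $\delta\to1^+$ and uses $\|\mathcal{H}_\mu-\mathcal{H}_\mu^\delta\|^q\lesssim\mu([1,\delta])\to\mu(\{1\})=0$ to exhibit $\mathcal{H}_\mu$ as a norm limit of compact operators. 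You instead prove that each single dilation $\mathcal{D}_t$, $t>1$, is compact via the sequential criterion — the tail estimate $\phi(ts)-\phi(s)\geq K\log t$ for large $s$ is where $\phi\in\mathcal{C}$ enters, and it is the same mechanism the paper exploits through Lagrange's theorem to get $e^{\varphi-\phi}\to0$ — and then transfer the null-sequence property to $\mathcal{H}_\mu$ by the vector-valued Minkowski inequality $\|\mathcal{H}_\mu f_k\|_{p,q,\phi}\leq\int_{(1,\infty)}t^{-1}\|\mathcal{D}_tf_k\|_{p,q,\phi}\,d\mu(t)$ and dominated convergence in $t$, with dominating function $t^{-1}\sup_k\|f_k\|_{p,q,\phi}$ (integrable by hypothesis, using $\|\mathcal{D}_t\|=1$ from Proposition \ref{proposition}). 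Your version is more streamlined: no auxiliary space, no operator-norm approximation, and the endpoint condition $\mu(\{1\})=0$ is absorbed automatically because the integration runs over $(1,\infty)$, where every $\mathcal{D}_t$ is compact (correctly so — $\mathcal{D}_1=I$ is not, so the hypothesis $\mu((0,1])=0$ is genuinely needed exactly where you use it). What the paper's route buys in exchange are byproducts of independent interest — the compact embedding between Fock spaces with different weights and an explicit quantitative rate $\|\mathcal{H}_\mu-\mathcal{H}_\mu^\delta\|\lesssim\mu([1,\delta])^{1/q}$. Two points you leave implicit are routine but worth a sentence in a final write-up: measurability of $t\mapsto\|\mathcal{D}_tf_k\|_{p,q,\phi}$ for the dominated-convergence step, and verification of hypotheses (a)--(c) of the compactness criterion (Lemma \ref{lemma3}) for $F_\phi^{p,q}$ and $\mathcal{H}_\mu$; the paper glosses the latter at the same level.
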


Our proof of Theorem \ref{compactness} requires the following two lemmas. The first lemma plays a key ingredient in the study of compactness
of linear operators.

\begin{lemma}[\cite{ACFPP} Lemma 4.10]\label{lemma3}
Let $X,Y$ be two Banach spaces of analytic functions on $\mathbb{C}$. Suppose that the following conditions are satisfied:

$\mathrm{(a)}$ the point evaluation functionals on $Y$ are continuous;

$\mathrm{(b)}$ the closed unit ball of $X$ is a compact subset of $X$ in the topology of uniform convergence on compact sets;

$\mathrm{(c)}$ $T:X \to Y$ is continuous when $X$ and $Y$ are given the topology of uniform convergence on compact sets.

Then $T$ is a compact operator if and only if given a bounded sequence $\{f_n\}$ in $X$ such that $f_n\to0$ uniformly on compact sets, then the sequence $\{Tf_n\}$ converges to zero in the norm of $Y$.
\end{lemma}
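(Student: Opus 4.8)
The plan is to prove both implications of the equivalence by playing the norm topology of $Y$ against the topology $\tau_0$ of uniform convergence on compact subsets of $\mathbb{C}$. The decisive link is that a sequence converging in the $Y$-norm converges at every point (condition (a)), while a $\tau_0$-convergent sequence also converges pointwise; so whenever one and the same sequence converges in both senses, the two limits must agree.

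For the forward direction I would assume $T$ is compact and take a bounded sequence $\{f_n\}$ in $X$ with $f_n \to 0$ in $\tau_0$, aiming to show $\|Tf_n\|_Y \to 0$. Rather than arguing directly, I would use the metric-space principle that it suffices to check that every subsequence of $\{Tf_n\}$ has a further subsequence tending to $0$ in norm. Fixing a subsequence, compactness of $T$ together with the boundedness of $\{f_n\}$ yields a norm-convergent sub-subsequence $Tf_{n_k}\to g$ in $Y$; by (a) this forces $Tf_{n_k}(z)\to g(z)$ for each $z$. On the other hand, (c) makes $T$ continuous for $\tau_0$, so $f_{n_k}\to 0$ in $\tau_0$ gives $Tf_{n_k}\to 0$ in $\tau_0$, hence pointwise. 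Equating the two pointwise limits forces $g=0$, and since the subsequence was arbitrary the whole sequence $\{Tf_n\}$ is norm-null.

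For the reverse direction I would assume the stated sequential condition and verify the definition of compactness directly. Given any bounded sequence in $X$, after rescaling I may take it inside the closed unit ball $B_X$. Condition (b) says $B_X$ is $\tau_0$-compact, so a subsequence $f_{n_k}$ converges in $\tau_0$ to some $f\in B_X\subset X$. The difference sequence $g_k:=f_{n_k}-f$ is bounded in $X$ and satisfies $g_k\to 0$ in $\tau_0$, so the hypothesis gives $\|Tg_k\|_Y\to 0$, i.e.\ $Tf_{n_k}\to Tf$ in the norm of $Y$. Thus every bounded sequence has an image with a norm-convergent subsequence, which is precisely the compactness of $T$.

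The main obstacle in both directions is the careful reconciliation of the two topologies, that is, the step where the $\tau_0$-limit and the $Y$-norm limit of a sequence are shown to coincide; this is exactly where conditions (a) and (c) are needed. Besides that, the only points requiring attention are the reduction to the subsequence criterion in the forward direction, which legitimizes identifying the norm limit as $0$, and, in the reverse direction, the fact that the $\tau_0$-limit $f$ truly lies in $X$, guaranteed by the $\tau_0$-compactness of $B_X$ in (b), so that $Tf$ is well defined and $\{g_k\}$ remains bounded.
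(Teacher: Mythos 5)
Your proof is correct. There is nothing in the paper to compare it against: the lemma is quoted verbatim from \cite{ACFPP} (Lemma 4.10) and the paper supplies no proof of it, so your argument stands on its own, and it is in fact the standard proof of this compactness criterion --- the subsequence principle combined with the identification of the norm limit and the compact-open limit via (a) and (c) for necessity, and sequential compactness of the unit ball from (b) for sufficiency. The one step worth making explicit is that the topology of uniform convergence on compact subsets of $\mathbb{C}$ is metrizable (via an exhaustion of $\mathbb{C}$ by closed disks): this is what converts the $\tau_0$-compactness of $B_X$ in (b) into the sequential compactness you extract a convergent subsequence from, and what lets you use the topological continuity in (c) sequentially; with that remark added, your argument is complete.
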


It is worth mentioning that conditions $\mathrm{(a)}$ and $\mathrm{(b)}$ of the previous lemma hold when $X=Y=F_\phi^{p,q}$, and in case Hausdorff operator satisfies $\mathrm{(c)}$. The second lemma provides a characterization that the mass of the measure $\mu$ at 1 is zero.

\begin{lemma}\label{lemma4}
Let $\mu$ be a positive Borel measure on $(0,\infty)$ such that $\mu(0,1)=0$ and $\int_{[1,\infty)}t^{-1}d\mu(t)<\infty$. Then
\begin{equation*}
\lim_{n\to\infty}\int_{[1,\infty)} \frac{1}{t^{n+1}}d\mu(t)=0
\end{equation*}
if and only if $\mu(\{1\})=0$.
\end{lemma}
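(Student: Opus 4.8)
The plan is to recognize this as a direct application of the dominated convergence theorem applied to the sequence of functions $f_n(t)=t^{-(n+1)}$ on $[1,\infty)$, integrated against $\mu$. Since $\mu(0,1)=0$, the measure is concentrated on $[1,\infty)$, so everything can be analyzed there. The crucial point is to identify both the pointwise limit of $f_n$ and a $\mu$-integrable dominating function; the hypothesis $\int_{[1,\infty)}t^{-1}\,d\mu(t)<\infty$ is precisely what supplies the latter.

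First I would record the pointwise behaviour of $f_n$ on $[1,\infty)$. At $t=1$ we have $f_n(1)=1$ for every $n$, whereas for $t>1$ the quantity $t^{-(n+1)}\to 0$ as $n\to\infty$. Hence $f_n(t)\to \mathbf{1}_{\{1\}}(t)$ pointwise on $[1,\infty)$. Next I would note the domination: for all $n\in\mathbb{N}_0$ and all $t\geq 1$ one has $t^{-(n+1)}\leq t^{-1}$, and by assumption $t\mapsto t^{-1}$ is integrable with respect to $\mu$ on $[1,\infty)$. (One could alternatively observe that $\{f_n\}$ is decreasing in $n$ with $f_0(t)=t^{-1}$ integrable, which is the monotone form of the same argument.)

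With these two facts in hand, the dominated convergence theorem yields
\begin{equation*}
\lim_{n\to\infty}\int_{[1,\infty)}\frac{1}{t^{n+1}}\,d\mu(t)=\int_{[1,\infty)}\mathbf{1}_{\{1\}}(t)\,d\mu(t)=\mu(\{1\}).
\end{equation*}
Thus the limit always exists and equals $\mu(\{1\})$, so it vanishes if and only if $\mu(\{1\})=0$, which is exactly the claimed equivalence. I do not anticipate any genuine obstacle here; the only thing requiring a moment of care is confirming that the stated integrability hypothesis furnishes a valid dominating function uniformly in $n$, which the inequality $t^{-(n+1)}\leq t^{-1}$ on $[1,\infty)$ settles at once.
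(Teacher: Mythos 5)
Your proof is correct and matches the paper's argument in essence: both rest on the dominated convergence theorem with dominating function $t^{-1}$, concluding that the limit always exists and equals $\mu(\{1\})$. The only cosmetic difference is that the paper first splits off the atom, writing the integral as $\mu(\{1\})+\int_{(1,\infty)}t^{-(n+1)}\,d\mu(t)$ and applying dominated convergence on $(1,\infty)$, whereas you apply it directly on $[1,\infty)$ with pointwise limit $\mathbf{1}_{\{1\}}$.
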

\begin{proof}
First from the relation
\begin{equation*}
\int_{[1,\infty)} \frac{1}{t^{n+1}}d\mu(t)=\mu(\{1\})\cdot1 + \int_{(1,\infty)} \frac{1}{t^{n+1}}d\mu(t),
\end{equation*}
by dominated convergence theorem with dominating function $\frac{1}{t}$, we get
\begin{equation*}
\lim_{n\to\infty}\int_{(1,\infty)} \frac{1}{t^{n+1}}d\mu(t)=0,
\end{equation*}
showing that
\begin{equation*}
\lim_{n\to\infty}\int_{[1,\infty)} \frac{1}{t^{n+1}}d\mu(t)=\mu(\{1\}).
\end{equation*}
This yields the desired result.
\end{proof}

Now, we come to the proof of Theorem \ref{compactness}.
\begin{proof}[Proof of Theorem \ref{compactness}]
Assume that $\mu((0,1])=0$ and $\int_{(0,\infty)}t^{-1}d\mu(t)<\infty$. Then
\begin{equation*}
\mathcal{H}_\mu f(z)=\int_{[1,\infty)} \frac{1}{t} f \bigg(\frac{z}{t}\bigg) d\mu(t)
\end{equation*}
and it is bounded on $F_\phi^{p,q}$. For any $\delta>1$, consider
\begin{equation*}
\mathcal{H}_\mu^\delta f(z)=\int_{(\delta,\infty)} \frac{1}{t} f \bigg(\frac{z}{t}\bigg) d\mu(t).
\end{equation*}
We prove that $\mathcal{H}_\mu^\delta$ is compact on $F_\phi^{p,q}$. For this, let
\begin{equation*}
\varphi(z)=\phi\bigg(\frac{z}{\delta}\bigg).
\end{equation*}
It sufficiently proves that the operator $\mathcal{H}_\mu^\delta:F_\phi^{p,q}\to F_\varphi^{p,q}$ is bounded and the inclusion $\mathcal{J}:F_\varphi^{p,q}\to F_\phi^{p,q}$ is compact.

We first prove that the inclusion map $\mathcal{J}: F_\varphi^{p,q} \to F_\phi^{p,q}$ is compact. By Lemma \ref{lemma3}, it is enough to prove that if $\{f_n\}$ is a bounded sequence in $F_\varphi^{p,q}$ that converges to $0$ uniformly on compact subsets of $\mathbb{C}$ then $\lim\limits_{n\to\infty}\|f_n\|_{p,q,\phi}=0$. By Lagrange's theorem, there exists $r_0\in(\frac{r}{\delta},r)$ such that
\begin{equation*}
\phi(r)-\phi\bigg(\frac{r}{\delta}\bigg)=(1-\delta^{-1}) \phi'(r_0)r.
\end{equation*}
This, together with $\phi\in \mathcal{C}$, gives that
\begin{equation*}
\lim_{|z|\to\infty}e^{[\varphi(z)-\phi(z)]}=0.
\end{equation*}
So given $\varepsilon>0$, we fix $R_1\in(0,\infty)$ with
\begin{equation*}
e^{[\varphi(z)-\phi(z)]}<\varepsilon, \quad |z|>R_1.
\end{equation*}
We shall consider the case $1\leq q<\infty$, the proof for $q=\infty$ is similar. Let $1\leq p\leq\infty$ and $1\leq q<\infty$. Then we get
\begin{align*}
\|f_n\|_{p,q,\phi}^q
&=\int_0^\infty M_p^q(f_n,r) e^{-q\phi(r)}rdr\\
&=\int_0^{R_1} M_p^q(f_n,r) e^{-q\phi(r)}rdr + \int_{R_1}^\infty M_p^q(f_n,r) e^{-q\phi(r)}rdr\\
&=\mathfrak{J}_1 + \mathfrak{J}_2.
\end{align*}
For $\mathfrak{J}_1$, since $\{f_n\}$ converges to $0$ uniformly on compact subsets of $\mathbb{C}$, we have
\begin{equation*}
\mathfrak{J}_1 \leq \sup_{|z|\leq R_1} |f_n(z)|^q \int_0^\infty r e^{-q\phi(r)}dr \lesssim \sup_{|z|\leq R_1} |f_n(z)|^q\to0
\end{equation*}
as $n\to\infty$. For $\mathfrak{J}_2$, since $\{f_n\}$ is a bounded sequence in $F_\varphi^{p,q}$, we get
\begin{equation*}
\mathfrak{J}_2= \int_{R_1}^\infty M_p^q(f_n,r) e^{-q\varphi(r)}r \bigg(e^{q[\varphi(r)-\phi(r)]}\bigg) dr \leq \|f_n\|_{p,q,\varphi}^q \varepsilon^q \lesssim \varepsilon^q.
\end{equation*}
So this gives $\lim\limits_{n\to\infty}\|f_n\|_{p,\phi}=0$. By Lemma \ref{lemma3}, the inclusion $\mathcal{J}$ is compact.

We next show that the operator $\mathcal{H}_\mu^\delta: F_\phi^{p,q}\to F_\varphi^{p,q}$ is bounded. By Minkowski's integral inequality and H\"{o}lder's inequality, we get
\begin{equation*}
M_p^q(\mathcal{H}_\mu^\delta f,r) \lesssim \int_\delta^\infty  \frac{1}{t} M_p^q(f,\frac{r}{t}) d\mu(t).
\end{equation*}
Applying Fubini's theorem, we deduce that
\begin{align}\label{equation5}
\|\mathcal{H}_\mu^\delta f\|_{p,q,\varphi}^q
=\int_0^\infty M_p^q(\mathcal{H}_\mu^\delta f, r)e^{-q\varphi(r)}rdr
\lesssim \int_\delta^\infty  \frac{1}{t} \|\mathcal{D}_tf\|_{p,q,\varphi}^q d\mu(t).
\end{align}
To proceed, we prove that the dilation operator $\mathcal{D}_t: F_\phi^{p,q}\to F_\varphi^{p,q}$ is bounded. If
\begin{equation*}
\lim_{r\to\infty} r\phi'(r)=\infty,
\end{equation*}
then, for arbitrary $M>0$, there exists a $R_2>0$ such that
\begin{equation*}
r\phi'(r)>M,
\end{equation*}
whenever $r>R_2$. In what follows, we take $M=\frac{2}{q}$. For $t\geq \delta$ and $f\in F_\phi^{p,q}$, we have
\begin{align*}
\|\mathcal{D}_t f\|_{p,q,\varphi}^q
&=\int_0^\infty M_p^q(f,\frac{r}{t})e^{-q\varphi(r)}rdr\\
&=\int_0^{tR_2} M_p^q(f,\frac{r}{t})e^{-q\varphi(r)}rdr + \int_{tR_2}^\infty M_p^q(f,\frac{r}{t})e^{-q\varphi(r)}rdr\\
&=\mathfrak{I}_1+\mathfrak{I}_2.
\end{align*}
We estimate $\mathfrak{I}_1$ and $\mathfrak{I}_2$ independently. Firstly, for $\mathfrak{I}_1$, we have
\begin{align*}
\mathfrak{I}_1
&=\int_0^{tR_2} M_p^q(f,\frac{r}{t})e^{-q\varphi(r)}rdr \leq M_p^q(f,R_2) \int_0^{tR_2}e^{-q\varphi(r)}rdr \lesssim M_p^q(f,R).
\end{align*}
On the other hand,
\begin{align*}
\|f\|_{p,q,\phi}^q
&=\int_0^\infty M_p^q(f,r)e^{-q\phi(r)}rdr \geq \int_{R_2}^\infty M_p^q(f,r) e^{-q\phi(r)}rdr\\
&\geq M_p^q(f,R_2) \int_{R_2}^\infty e^{-q\phi(r)}rdr\\
&\simeq M_p^q(f,R_2).
\end{align*}
So we obtain the inequality $\mathfrak{I}_1 \lesssim \|f\|_{p,q,\phi}^q$, where the constants are independent of $t$.

For $\mathfrak{I}_2$, we have
\begin{align*}
\mathfrak{I}_2
&=\int_{tR_2}^\infty M_p^q(f,\frac{r}{t})e^{-q\varphi(r)}rdr=\int_{R_2}^\infty M_p^q(f,s) t^2e^{-q\varphi(ts)} sds\\
&\leq \int_{R_2}^\infty M_p^q(f,s) \bigg(\sup_{t\geq \delta}t^2e^{-q\varphi(ts)}\bigg) sds.
\end{align*}
We check
\begin{equation*}
\sup_{t\geq \delta} t^2e^{-q\varphi(ts)} = \delta^2 e^{-q\phi(s)}.
\end{equation*}
Let $s>R_2$. Set
\begin{equation*}
h(t)=t^2e^{-q\varphi(ts)}, \quad t\geq\delta.
\end{equation*}
We claim that that the function $h(t)$ is decreasing on $[\delta,\infty)$. In fact, since
\begin{align*}
h'(t)&=2te^{-q\varphi(ts)} + t^2e^{-q\varphi(ts)} [-q\varphi'(ts)s]\\
     &=2te^{-q\varphi(ts)}\bigg[1-\frac{q}{2}\varphi'(ts)ts\bigg],
\end{align*}
\begin{equation*}
1-\frac{q}{2}\varphi'(ts)ts=1-\frac{q}{2}\phi'\bigg(\frac{t}{\delta}s\bigg)\frac{t}{\delta}s, \quad \frac{t}{\delta}\geq 1,
\end{equation*}
and
\begin{equation*}
\phi'\bigg(\frac{t}{\delta}s\bigg)\frac{t}{\delta}s>\frac{2}{q},
\end{equation*}
we have $h'(t)<0$ for all $t\geq\delta$. So we obtain $\mathfrak{I}_2 \lesssim \|f\|_{p,q,\phi}^q$, where the constants are independent of $t$ again. From this, we get that $\mathcal{D}_t: F_\phi^{p,q}\to F_\varphi^{p,q}$ is bounded. By (\ref{equation5}) and $\int_{(0,\infty)}t^{-1}d\mu(t)<\infty$, we see that $\mathcal{H}_\mu^\delta:F_\varphi^p\to F_\phi^p$ is bounded. This, together with the compactness of the inclusion $\mathcal{J}: F_\varphi^{p,q}\to F_\phi^{p,q}$ yields that $\mathcal{H}_\mu^\delta$ is compact on $F_\phi^{p,q}$.

Finally, for any $f\in F_\phi^{p,q}$, by Proposition \ref{proposition}, we get
\begin{equation*}
\|\mathcal{H}_\mu f - \mathcal{H}_\mu^\delta f\|_{p,q,\phi}^q \lesssim \int_{[1,\delta]}  \frac{1}{t} \|\mathcal{D}_tf\|_{p,q,\phi}^q d\mu(t)\leq \mu([1,\delta])\|f\|_{p,q,\phi}^q.
\end{equation*}
Therefore,
\begin{equation*}
\|\mathcal{H}_\mu - \mathcal{H}_\mu^\delta\|^q \lesssim \mu([1,\delta]).
\end{equation*}
Let $\delta\to1^+$, by $\mu(\{1\})=0$, we obtain that $\mathcal{H}_\mu$ is compact on $F_\phi^{p,q}$.

Conversely, suppose that $\mathcal{H}_\mu$ is compact on $F_\phi^{p,q}$. Then $\mathcal{H}_\mu$ is bounded and $\mu(0,1)=0$ and $\int_{[1,\infty)}t^{-1}d\mu(t)<\infty$. Consider the normalized functions
\begin{equation*}
\widetilde{h}_n(z)=\frac{z^n}{\|z^n\|_{p,q,\phi}},  \quad  n\in\mathbb{N}_0.
\end{equation*}
Then the sequence $\{\widetilde{h}_n\}$ is bounded in $F_\phi^{p,q}$ and tends to zero uniformly on compact subsets of $\mathbb{C}$. Moreover, for each $n\in\mathbb{N}_0$,
\begin{equation*}
\mathcal{H}_\mu \widetilde{h}_n = \int_{[1,\infty)}\frac{1}{t^{n+1}}d\mu(t) \widetilde{h}_n.
\end{equation*}
It follows from the compactness of $\mathcal{H}_\mu$ and Lemma \ref{lemma3} that
\begin{equation*}
\lim_{n\to\infty}\int_{[1,\infty)}\frac{1}{t^{n+1}}d\mu(t)=\lim_{n\to\infty}\|\mathcal{H}_\mu h_n\|_{p,q,\phi}=0.
\end{equation*}
By Lemma \ref{lemma4}, we get that $\mu(\{1\})=0$ and so $\mu((0, 1])=0$. The proof is complete.
\end{proof}

%For $\phi\in \mathcal{W}_p$, $0<p<\infty$, we can also prove that Hausdorff operators $\mathcal{H}_\mu$ is bounded on weighted Fock spaces $F_\phi^p$ if and only if $\int_{[1,\infty)}t^{-1}d\mu(t)<\infty$ and $\mu(0,1)=0$. And its proofs contain no new ideas or techniques. Let $\phi(r)=a\ln(1+r)$, $ap>2$. Then $\phi\in \mathcal{W}_p$, but $\phi\notin \mathcal{C}$. It might be interesting to explore the compactness of Hausdorff operators $\mathcal{H}_\mu$ on weighted Fock spaces $F_\phi^p$ with $\phi\in \mathcal{W}_p$.

\section{Power boundedness and uniformly mean ergodicity of $\mathcal{H}_\mu$}\label{section4}

Another interesting notion closely related to boundedness of an operator is power boundedness. This notion finds many applications in the study of the ergodicity, stability, and dynamics of linear operator (see \cite{Bonet1, JJKS, KMOT, SM}). In this section, we investigate when Hausdorff operator is power bounded or uniformly mean ergodic on weighted mixed norm Fock spaces.

%The power boundedness and uniformly mean ergodic have been investigated for different operators on different Banach space of analytic functions.

Let $X$ be a Banach space and $T$ a bounded linear operator on $X$. Then, for $n\in\mathbb{N}$, we set the operator $T^n$ as the $n$th iterate of $T$. Recall that $T$ is power bounded if
\begin{equation*}
\sup_{n\in\mathbb{N}_0}\|T^n\|<\infty.
\end{equation*}
An operator $T$ is said to be quasi-compact if $T^m$ is compact for some $m\in\mathbb{N}$, equivalently, there exists an integer $m$ and a compact
operator $Q$ such that $\|T^m-Q\|<1$. Quasi-compact operators share some properties of compact operators, in particular their spectrum reduces to eigenvalues and $\{0\}$. The Ces\`{a}ro means of $T$ are defined by
\begin{equation*}
T_{[n]}=\frac{1}{n}\sum_{k=1}^n T^k, \quad n\in\mathbb{N}_0.
\end{equation*}
The following equality is well-known and can be checked easily
\begin{equation}\label{equation6}
\frac{1}{n}T^n=T_{[n]}-\frac{n-1}{n}T_{[n-1]}, \quad n\in\mathbb{N}_0,
\end{equation}
where $T_{[0]}=I$ is the identity operator on $X$. A bounded linear operator $T$ on a Banach space $X$ is said to be mean ergodic if there exists a bounded linear operator $P$ on $X$ such that
\begin{equation*}
Px=\lim_{n\to\infty}T_{[n]} x,  \quad  x\in X
\end{equation*}
exists in $X$. If the convergence is in the operator norm, then $T$ is called uniformly mean ergodic. Clearly, by (\ref{equation6}), if $T$ is uniformly mean ergodic, then
\begin{equation}\label{equation4}
\lim_{n\to\infty}\frac{\|T^n\|}{n}=0.
\end{equation}
See \cite{Krengel} for ergodic theory of operators on Banach spaces.

We now state the main result for power boundedness of $\mathcal{H}_\mu$.
\begin{theorem}\label{power bounded}
Let $\mu$ be a positive Borel measure on $(0,\infty)$ and let $1\leq p,q\leq\infty$, $\phi$ be a weight. Then the following conditions are equivalent.

$\mathrm{(i)}$ $\mathcal{H}_\mu$ is power bounded on $F_\phi^{p,q}$.

$\mathrm{(ii)}$ $\mathcal{H}_\mu$ is a contraction on $F_\phi^{p,q}$.

$\mathrm{(iii)}$ $\mu(0,1)=0$ and $\int_{[1,\infty)}\frac{1}{t}d\mu(t)\leq1$. Moreover, for each $n\in\mathbb{N}$, we have
\begin{equation}\label{equation3}
\|\mathcal{H}_\mu^n\|=\|\mathcal{H}_\mu\|^n=\bigg(\int_{[1,\infty)}\frac{1}{t}d\mu(t)\bigg)^n.
\end{equation}
\end{theorem}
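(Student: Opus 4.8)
The plan is to reduce all three equivalences to the single norm identity (\ref{equation3}), which in turn rests on the observation that the constant function is an eigenfunction of $\mathcal{H}_\mu$ realizing its operator norm. First I would record that, since $\phi$ is a weight, the function $\mathbf{1}\equiv1$ belongs to $F_\phi^{p,q}$: one has $M_p(1,r)=1$, so $\|1\|_{p,q,\phi}^q=\int_0^\infty e^{-q\phi(r)}r\,dr<\infty$ for $q<\infty$, and $\|1\|_{p,\infty,\phi}=\sup_r e^{-\phi(r)}\leq1$ for $q=\infty$. Whenever $\mathcal{H}_\mu$ is bounded, Theorem \ref{boundedness} gives $\mu(0,1)=0$ together with $\mu_0:=\int_{[1,\infty)}\frac1t\,d\mu(t)=\|\mathcal{H}_\mu\|<\infty$. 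Evaluating the operator on $\mathbf{1}$ directly from its definition yields $\mathcal{H}_\mu\mathbf{1}(z)=\int_{[1,\infty)}\frac1t\,d\mu(t)=\mu_0$ for all $z\in\mathbb{C}$, so $\mathbf{1}$ is an eigenfunction with eigenvalue $\mu_0=\|\mathcal{H}_\mu\|$, and by iteration $\mathcal{H}_\mu^n\mathbf{1}=\mu_0^n\,\mathbf{1}$.

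The identity (\ref{equation3}) then follows immediately. Submultiplicativity of the operator norm gives $\|\mathcal{H}_\mu^n\|\leq\|\mathcal{H}_\mu\|^n=\mu_0^n$, while testing on $\mathbf{1}$ gives the reverse bound
\[
\|\mathcal{H}_\mu^n\|\geq\frac{\|\mathcal{H}_\mu^n\mathbf{1}\|_{p,q,\phi}}{\|\mathbf{1}\|_{p,q,\phi}}=\mu_0^n .
\]
Hence $\|\mathcal{H}_\mu^n\|=\mu_0^n=\big(\int_{[1,\infty)}\frac1t\,d\mu(t)\big)^n$ for every $n\in\mathbb{N}$, which is exactly (\ref{equation3}).

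With the norm identity in hand I would close the cycle using Theorem \ref{boundedness}. The implication (ii)$\Rightarrow$(i) is trivial, since a contraction satisfies $\|\mathcal{H}_\mu^n\|\leq\|\mathcal{H}_\mu\|^n\leq1$. For (i)$\Rightarrow$(iii), power boundedness forces $\mathcal{H}_\mu$ to be bounded, so Theorem \ref{boundedness} applies and the preceding paragraph gives $\sup_n\mu_0^n=\sup_n\|\mathcal{H}_\mu^n\|<\infty$; this is possible only if $\mu_0\leq1$, which together with $\mu(0,1)=0$ and (\ref{equation3}) is precisely (iii). Finally (iii)$\Rightarrow$(ii) is again Theorem \ref{boundedness}: the stated hypotheses make $\mathcal{H}_\mu$ bounded with $\|\mathcal{H}_\mu\|=\int_{[1,\infty)}\frac1t\,d\mu(t)\leq1$, i.e.\ a contraction.

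I expect no genuine obstacle here: the entire content is the recognition that $\mathbf{1}$ is a norm-attaining eigenvector, after which everything is a formal consequence of Theorem \ref{boundedness} and submultiplicativity. The only points needing a moment of care are verifying $\mathbf{1}\in F_\phi^{p,q}$ across all parameter ranges (notably $q=\infty$) and justifying the iteration $\mathcal{H}_\mu^n\mathbf{1}=\mu_0^n\mathbf{1}$, both of which are immediate from the definitions.
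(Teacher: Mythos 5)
Your proof is correct and follows essentially the same route as the paper: both hinge on the observation that the constant function $\mathbf{1}$ is an eigenfunction of $\mathcal{H}_\mu^n$ with eigenvalue $\big(\int_{[1,\infty)}t^{-1}d\mu(t)\big)^n$, which supplies the lower bound for $\|\mathcal{H}_\mu^n\|$, and then everything reduces to the norm formula of Theorem \ref{boundedness}. The only cosmetic difference is that you obtain the matching upper bound from submultiplicativity $\|\mathcal{H}_\mu^n\|\leq\|\mathcal{H}_\mu\|^n$ combined with (\ref{equation2}), whereas the paper re-derives it by applying Minkowski's integral inequality $n$ times; your shortcut is legitimate and slightly cleaner.
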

\begin{proof}
It is obvious that $\mathrm{(ii)}$ implies $\mathrm{(i)}$. The equivalence of $\mathrm{(ii)}$ and $\mathrm{(iii)}$ follows from Theorem \ref{boundedness}. To complete the proof, it remains to show that condition $\mathrm{(i)}$ implies $\mathrm{(ii)}$. Let us assume that $\mathcal{H}_\mu$ is power bounded. Since $\mu(0,1)=0$, we have
\begin{equation*}
\mathcal{H}_\mu f(z)=\int_{[1,\infty)} \frac{1}{t} f \bigg(\frac{z}{t}\bigg) d\mu(t).
\end{equation*}
By $n$ applications of Minkowski's integral inequality, $M_p(f,r)$ is increasing with $r\in[0,\infty)$, we get
\begin{align*}
M_p(\mathcal{H}_\mu^n f,r)
&\leq \int_{[1,\infty)}  \frac{1}{t} M_p\bigg(\mathcal{H}_\mu^{n-1} f,\frac{r}{t}\bigg) d\mu(t)
 \leq \int_{[1,\infty)}\frac{1}{t}d\mu(t) M_p(\mathcal{H}_\mu^{n-1} f,r)\\
&\leq \cdots \leq \bigg(\int_{[1,\infty)}\frac{1}{t}d\mu(t)\bigg)^n M_p(f,r).
\end{align*}
Multiplying both sides by $re^{-q\phi(r)}$ and integrating over $r$ yields
\begin{equation*}
\|\mathcal{H}_\mu^n f\|_{p,q,\phi} \leq \bigg(\int_{[1,\infty)}\frac{1}{t}d\mu(t)\bigg)^n \|f\|_{p,q,\phi},
\end{equation*}
which implies that $\mathcal{H}_\mu^n$ is bounded on $F_\phi^{p,q}$ and
\begin{equation*}
\|\mathcal{H}_\mu^n\| \leq \bigg(\int_{[1,\infty)}\frac{1}{t}d\mu(t)\bigg)^n.
\end{equation*}
On the other hand, if $\mathcal{H}_\mu^n$ is bounded, then
\begin{equation*}
\|\mathcal{H}_\mu^n 1\|_{p,q,\phi} = \bigg(\int_1^\infty\frac{1}{t}d\mu(t)\bigg)^n \|1\|_{p,q,\phi}\leq \|\mathcal{H}_\mu^n\|\|1\|_{p,q,\phi},
\end{equation*}
and hence
\begin{equation*}
\|\mathcal{H}_\mu^n\|\geq \bigg(\int_{[1,\infty)}\frac{1}{t}d\mu(t)\bigg)^n.
\end{equation*}
So we get
\begin{equation*}
\|\mathcal{H}_\mu^n\|= \bigg(\int_{[1,\infty)}\frac{1}{t}d\mu(t)\bigg)^n.
\end{equation*}
This, together with (\ref{equation2}) yields that
\begin{equation*}
\|\mathcal{H}_\mu^n\|=\|\mathcal{H}_\mu\|^n.
\end{equation*}
We obtain equality (\ref{equation3}) of the norm and the integral. Therefore, $\|\mathcal{H}_\mu\|\leq1$ and condition $\mathrm{(ii)}$ holds. The proof is complete.
\end{proof}

Having identified conditions under which $\mathcal{H}_\mu$ is power bounded, we turn our attention to the uniformly mean ergodicity of $\mathcal{H}_\mu$ on $F_\phi^{p,q}$. We first state the following result about compact Hausdorff operators on weighted mixed norm Fock spaces.
\begin{theorem}\label{compact UME}
Let $\mu$ be a positive Borel measure on $(0,\infty)$ satisfying $\mu((0,1])=0$, $\phi\in\mathcal{C}$, and $1\leq p,q\leq\infty$. Then the Hausdorff operator $\mathcal{H}_\mu$ is uniformly mean ergodic on $F_\phi^{p,q}$ if and only if
\begin{equation*}
\int_{(1,\infty)}\frac{1}{t}d\mu(t)\leq1.
\end{equation*}
\end{theorem}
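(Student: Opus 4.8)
The plan is to reduce the statement to the spectral structure of $\mathcal{H}_\mu$, exploiting two things already available: that $\|\mathcal{H}_\mu^n\|$ is exactly computable, and that under $\mu((0,1])=0$ the operator $\mathcal{H}_\mu$ is compact by Theorem \ref{compactness}. Throughout I write $T=\mathcal{H}_\mu$ and $\mu_n=\int_{(0,\infty)}t^{-(1+n)}\,d\mu(t)$; since $\mu((0,1])=0$ we have $\mu_n=\int_{(1,\infty)}t^{-(1+n)}\,d\mu(t)$, and in particular $\mu_0=\int_{(1,\infty)}t^{-1}\,d\mu(t)=\|\mathcal{H}_\mu\|$ by Theorem \ref{boundedness} (using $\mu(\{1\})=0$). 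The goal is to show uniform mean ergodicity is equivalent to $\mu_0\le1$.

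For necessity, suppose $\mathcal{H}_\mu$ is uniformly mean ergodic. Then it is bounded, so $\mu_0<\infty$, and by (\ref{equation4}) we must have $\|\mathcal{H}_\mu^n\|/n\to0$. The Minkowski-iteration computation in the proof of Theorem \ref{power bounded} yields $\|\mathcal{H}_\mu^n\|=\mu_0^n$ as soon as $\mu(0,1)=0$ and $\mu_0<\infty$ (it does not need power boundedness, only the norm formula (\ref{equation3}) and the test function $f=1$). Hence $\mu_0^n/n\to0$, which forces $\mu_0\le1$, i.e. $\int_{(1,\infty)}t^{-1}\,d\mu(t)\le1$.

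For sufficiency assume $\mu_0\le1$. Since $\mathcal{H}_\mu$ acts as the coefficient multiplier $\sum a_nz^n\mapsto\sum\mu_na_nz^n$ (the coefficient-multiplier corollary, applicable as $\sup_n\mu_n=\mu_0<\infty$), each $\mu_n$ is an eigenvalue with eigenvector $z^n$; comparing coefficients shows these are the only nonzero eigenvalues, the $\mu_n$ are strictly decreasing (because $t>1$ $\mu$-a.e.) and tend to $0$ by Lemma \ref{lemma4}. By compactness (Theorem \ref{compactness}) the spectrum is exactly $\{0\}\cup\{\mu_n:n\ge0\}$, with spectral radius $\mu_0$. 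If $\mu_0<1$ then $r(\mathcal{H}_\mu)<1$, so $\|\mathcal{H}_\mu^n\|=\mu_0^n\to0$ geometrically and $\|T_{[n]}\|\le\frac1n\sum_{k=1}^n\mu_0^k\to0$, giving uniform mean ergodicity with limit $P=0$. If $\mu_0=1$ then $1$ is an isolated eigenvalue and the only point of $\sigma(\mathcal{H}_\mu)$ on the unit circle, with one-dimensional eigenspace $\ker(I-\mathcal{H}_\mu)=\mathbb{C}\cdot1$. I would then invoke the Riesz decomposition $F_\phi^{p,q}=M_1\oplus M_2$ induced by the spectral projection $P$ onto $\{1\}$: on $M_1$ one has $\mathcal{H}_\mu=I$, so $T_{[n]}|_{M_1}=I=P|_{M_1}$, while $\sigma(\mathcal{H}_\mu|_{M_2})=\sigma(\mathcal{H}_\mu)\setminus\{1\}$ has spectral radius $\mu_1<1$, whence $\|(\mathcal{H}_\mu|_{M_2})^k\|\le C\rho^k$ for some $\rho\in(\mu_1,1)$ and $\|T_{[n]}|_{M_2}\|\to0$. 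Adding the two pieces gives $\|T_{[n]}-P\|\to0$, i.e. uniform mean ergodicity.

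The step I expect to be the crux is the boundary case $\mu_0=1$, specifically verifying that the peripheral eigenvalue $1$ is a \emph{simple} pole of the resolvent, equivalently $\ker(I-\mathcal{H}_\mu)^2=\ker(I-\mathcal{H}_\mu)$. This is exactly what guarantees $\mathcal{H}_\mu|_{M_1}=I$ rather than $I$ plus a nonzero nilpotent part; a nontrivial Jordan block would make the Cesàro averages on $M_1$ grow like $n$ and destroy norm convergence. I would settle it either directly from the diagonal action, since $(I-\mathcal{H}_\mu)^2z^n=(1-\mu_n)^2z^n$ vanishes only for $n=0$, or by observing that a contraction ($\|\mathcal{H}_\mu\|=\mu_0=1$) cannot carry a nontrivial Jordan block on the unit circle. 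Once the pole is known to be simple, the operator-norm convergence $T_{[n]}\to P$ follows from the Riesz splitting above, completing the proof.
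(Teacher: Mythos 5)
Your proof is correct, and its necessity half coincides with the paper's: both deduce from (\ref{equation4}) and the norm formula (\ref{equation3}) that $\frac{1}{n}\|\mathcal{H}_\mu\|^n=\frac{1}{n}\|\mathcal{H}_\mu^n\|\to0$, forcing $\|\mathcal{H}_\mu\|=\int_{(1,\infty)}t^{-1}d\mu(t)\leq1$. The sufficiency half is where you genuinely diverge. The paper is much shorter there: from the integral bound it concludes that $\mathcal{H}_\mu$ is a contraction (hence power bounded) and compact (Theorem \ref{compactness}), and then simply invokes Yosida--Kakutani's theorem (power bounded plus quasi-compact implies uniformly mean ergodic). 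You instead reprove, by hand, the relevant case of that theorem: using the coefficient-multiplier corollary you identify the spectrum of the compact operator as $\{0\}\cup\{\mu_n:n\geq0\}$ with $\mu_n$ strictly decreasing to $0$, and in the boundary case $\mu_0=1$ you run the Riesz decomposition at the isolated peripheral eigenvalue $1$, with the crucial verification --- which you correctly single out as the crux --- that this eigenvalue has ascent one, i.e. $\ker(I-\mathcal{H}_\mu)^2=\ker(I-\mathcal{H}_\mu)=\mathbb{C}\cdot1$; this is indeed immediate from the diagonal action since $\mu_n<1$ for $n\geq1$, and your alternative argument (a contraction admits no Jordan block at a unimodular eigenvalue, as it would force $\|\mathcal{H}_\mu^n\|\to\infty$) also works. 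What your route buys: it is self-contained modulo standard Riesz theory, it identifies the ergodic limit explicitly as the rank-one spectral projection onto the constants, and it shows the convergence of the Ces\`{a}ro means is governed by the spectral gap $1-\mu_1$. What the paper's route buys: brevity, with no spectral analysis needed. One caveat shared by both arguments: Theorem \ref{compactness} is invoked without its stated hypothesis $\lim_{n\to\infty}\|z^n\|_{p,q,\phi}^{1/n}=\infty$; this is harmless because that hypothesis enters only the necessity direction of that theorem's proof, but strictly speaking it deserves a remark.
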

\begin{proof}
Assume that $\mathcal{H}_\mu$ is uniformly mean ergodic. By (\ref{equation4}) and (\ref{equation3}), we have
\begin{equation*}
\frac{1}{n}\|\mathcal{H}_\mu\|^n=\frac{1}{n}\|\mathcal{H}_\mu^n\|\to0
\end{equation*}
as $n\to\infty$. It follows that $\|\mathcal{H}_\mu\|\leq1$. By (\ref{equation2}), we get
\begin{equation*}
\int_{(1,\infty)}\frac{1}{t}d\mu(t)\leq1
\end{equation*}

Conversely, if
\begin{equation*}
\int_{(1,\infty)}\frac{1}{t}d\mu(t)\leq1,
\end{equation*}
then $\mathcal{H}_\mu$ is a contraction and so it is power bounded. By Theorem \ref{compactness}, we see that $\mathcal{H}_\mu$ is compact on $F_\phi^{p,q}$. This, together with the power boundedness of $\mathcal{H}_\mu$, gives that $\mathcal{H}_\mu$ is uniformly mean ergodic on $F_\phi^{p,q}$ (see Yosida and Kakutani's Theorem \cite{YK}: If an operator $T$ is power bounded and quasi-compact on $X$, then $T$ is uniformly mean ergodic). The proof is complete.
\end{proof}

An interesting consequence about the compact Hausdorff operator on weighted mixed norm Fork spaces is the following.
\begin{corollary}
Let $1\leq p,q\leq\infty$ and $\mathcal{H}_\mu$ be a compact operator on $F_\phi^{p,q}$. Then the following conditions are equivalent.

$\mathrm{(i)}$ $\mathcal{H}_\mu$ is power bounded.

$\mathrm{(ii)}$ $\mathcal{H}_\mu$ is a contraction.

$\mathrm{(iii)}$ $\mathcal{H}_\mu$ is uniformly mean ergodic.
\end{corollary}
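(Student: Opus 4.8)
The plan is to obtain all three equivalences by routing them through the two preceding theorems, so that the only genuinely new ingredient is a growth estimate for $\|\mathcal{H}_\mu^n\|$. First I would dispose of $\mathrm{(i)}\Leftrightarrow\mathrm{(ii)}$, which is immediate from Theorem \ref{power bounded}: its conditions $\mathrm{(i)}$ and $\mathrm{(ii)}$ are exactly the present ones, and their equivalence there makes no use of compactness. Thus the entire content of the corollary reduces to tying uniform mean ergodicity to these two properties.

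For $\mathrm{(ii)}\Rightarrow\mathrm{(iii)}$ I would argue exactly as in the proof of Theorem \ref{compact UME}. A contraction is trivially power bounded, since $\|\mathcal{H}_\mu^n\|\le\|\mathcal{H}_\mu\|^n\le1$, and the standing hypothesis that $\mathcal{H}_\mu$ is compact makes it quasi-compact (take $m=1$). Yosida and Kakutani's theorem \cite{YK} then yields that $\mathcal{H}_\mu$ is uniformly mean ergodic, so no new estimate is required here; this is the single place where compactness is actually invoked.

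The step carrying the real work is $\mathrm{(iii)}\Rightarrow\mathrm{(ii)}$. Assuming uniform mean ergodicity, the Ces\`aro identity (\ref{equation6}) gives the consequence (\ref{equation4}), namely $\lim_{n\to\infty}\|\mathcal{H}_\mu^n\|/n=0$. Since a uniformly mean ergodic operator is in particular bounded, Theorem \ref{boundedness} forces $\mu(0,1)=0$ and $c:=\int_{[1,\infty)}t^{-1}\,d\mu(t)<\infty$, so $\mathcal{H}_\mu f(z)=\int_{[1,\infty)}t^{-1}f(z/t)\,d\mu(t)$. Testing on the constant function $1\in F_\phi^{p,q}$, which satisfies $\mathcal{H}_\mu 1=c$ and hence $\mathcal{H}_\mu^n 1=c^n$ by induction, I obtain the lower bound $\|\mathcal{H}_\mu^n\|\ge c^n$. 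If $c>1$ then $c^n/n\to\infty$, contradicting (\ref{equation4}); therefore $c\le1$, which by (\ref{equation2}) means $\|\mathcal{H}_\mu\|\le1$, i.e.\ $\mathcal{H}_\mu$ is a contraction. This closes the cycle $\mathrm{(i)}\Leftrightarrow\mathrm{(ii)}\Rightarrow\mathrm{(iii)}\Rightarrow\mathrm{(ii)}$.

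The main obstacle is not conceptual but lies in justifying the lower bound $\|\mathcal{H}_\mu^n\|\ge c^n$ \emph{without} presupposing power boundedness, since equation (\ref{equation3}) was derived in Theorem \ref{power bounded} precisely under that assumption. The resolution is that mere boundedness already suffices: Theorem \ref{boundedness} delivers $\mu(0,1)=0$ and the finiteness of $c$, so the constant function is a legitimate nonzero vector on which $\mathcal{H}_\mu$ acts as the scalar $c$, and the scalar action $\mathcal{H}_\mu^n 1=c^n$ is unconditional. Once this is in place, the subexponential growth imposed by uniform mean ergodicity immediately caps $c$ at $1$, completing the equivalence.
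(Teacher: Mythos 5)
Your proof is correct and follows essentially the same route the paper intends for this corollary: Theorem \ref{power bounded} for $\mathrm{(i)}\Leftrightarrow\mathrm{(ii)}$, Yosida--Kakutani applied to the compact (hence quasi-compact) power bounded operator for $\mathrm{(ii)}\Rightarrow\mathrm{(iii)}$, and the growth of $\|\mathcal{H}_\mu^n\|$ against (\ref{equation4}) for $\mathrm{(iii)}\Rightarrow\mathrm{(ii)}$. Your re-derivation of the lower bound $\|\mathcal{H}_\mu^n\|\geq c^n$ by testing on the constant function is exactly how the paper itself obtains that half of (\ref{equation3}), and it cleanly resolves the genuine subtlety you flag, namely that (\ref{equation3}) is stated under a power-boundedness hypothesis while only boundedness (from compactness) is available at that point.
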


We next consider the uniformly mean ergodicity for bounded Hausdorff operator $\mathcal{H}_\mu$ on $F_\phi^{p,q}$.
\begin{theorem}\label{bounded UME}
Let $\mu$ be a positive Borel measure on $(0,\infty)$ satisfying the condition $\mu(0,1)=0$, $1\leq p,q\leq\infty$, and $\phi$ be a weight.

$\mathrm{(i)}$ If $\mathcal{H}_\mu$ is uniformly mean ergodic on $F_\phi^{p,q}$, then
\begin{equation*}
\int_{[1,\infty)}\frac{1}{t}d\mu(t)\leq1.
\end{equation*}

$\mathrm{(ii)}$ If
\begin{equation*}
\int_{[1,\infty)}\frac{1}{t}d\mu(t)<1,
\end{equation*}
then $\mathcal{H}_\mu$ is uniformly mean ergodic on $F_\phi^{p,q}$, and $\lim\limits_{n\to\infty}{{\mathcal{H}_{\mu}}_{[n]}}=0$.
\end{theorem}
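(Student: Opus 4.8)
The plan is to reduce both assertions to the exact power-norm identity (\ref{equation3}) together with two elementary estimates, writing $c:=\int_{[1,\infty)}\frac{1}{t}d\mu(t)=\|\mathcal{H}_\mu\|$. The crucial preliminary observation is that the standing hypothesis $\mu(0,1)=0$ already forces $\|\mathcal{H}_\mu^n\|=c^n$ for every $n$, with no power-boundedness assumption required: the iterated Minkowski estimate from the proof of Theorem \ref{power bounded} gives $M_p(\mathcal{H}_\mu^n f,r)\leq c^n M_p(f,r)$, hence $\|\mathcal{H}_\mu^n\|\leq c^n$, while testing on the constant function $f\equiv 1$ (for which $\mathcal{H}_\mu^n 1=c^n\cdot 1$) supplies the reverse inequality. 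Thus the derivation of (\ref{equation3}) uses only $\mu(0,1)=0$ and the boundedness of $\mathcal{H}_\mu$.

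For part $\mathrm{(i)}$ I would argue exactly as in Theorem \ref{compact UME}. If $\mathcal{H}_\mu$ is uniformly mean ergodic then it is in particular bounded, so $c<\infty$ by Theorem \ref{boundedness}, and the necessary condition (\ref{equation4}) yields $\|\mathcal{H}_\mu^n\|/n\to 0$. Substituting $\|\mathcal{H}_\mu^n\|=c^n$ gives $c^n/n\to 0$, which forces $c\leq 1$, since the ratio diverges whenever $c>1$. This is precisely the claimed inequality $\int_{[1,\infty)}\frac{1}{t}d\mu(t)\leq 1$.

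For part $\mathrm{(ii)}$ the assumption $c<1$ makes $\mathcal{H}_\mu$ a strict contraction with $\|\mathcal{H}_\mu^n\|=c^n\to 0$ geometrically, and I would estimate the Ces\`aro means directly by the triangle inequality and a geometric sum. For each $n$,
\begin{equation*}
\|{\mathcal{H}_{\mu}}_{[n]}\|\leq \frac{1}{n}\sum_{k=1}^n\|\mathcal{H}_\mu^k\|=\frac{1}{n}\sum_{k=1}^n c^k\leq \frac{1}{n}\cdot\frac{c}{1-c}\to 0 \quad (n\to\infty).
\end{equation*}
Hence ${\mathcal{H}_{\mu}}_{[n]}\to 0$ in operator norm, so $\mathcal{H}_\mu$ is uniformly mean ergodic with limit operator $P=0$, and $\lim_{n\to\infty}{\mathcal{H}_{\mu}}_{[n]}=0$.

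There is no genuine obstacle in the computation itself: once the exact identity $\|\mathcal{H}_\mu^n\|=c^n$ is in hand, part $\mathrm{(i)}$ is the elementary equivalence $c^n/n\to 0\Leftrightarrow c\leq 1$, and part $\mathrm{(ii)}$ is the fact that the Ces\`aro averages of the geometrically decaying norm sequence $(c^n)$ vanish. The only point demanding care is to invoke (\ref{equation3}) under the hypothesis $\mu(0,1)=0$ rather than under power boundedness; this is legitimate precisely because, as noted above, that identity rests only on $\mu(0,1)=0$ and the test function $f\equiv 1$.
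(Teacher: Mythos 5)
Your proposal is correct and follows essentially the same route as the paper: part $\mathrm{(i)}$ by combining (\ref{equation4}) with the power-norm identity (\ref{equation3}) to force $c\leq 1$, and part $\mathrm{(ii)}$ by bounding the Ces\`aro means with the geometric sum $\frac{1}{n}\sum_{k=1}^n c^k\to 0$. Your explicit observation that (\ref{equation3}) rests only on $\mu(0,1)=0$ and boundedness of $\mathcal{H}_\mu$ (not on power boundedness, under which the paper formally derives it) is a useful clarification of a point the paper leaves implicit, but it does not alter the argument.
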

\begin{proof}
The proof of part $\mathrm{(i)}$ is similar to the necessity in Theorem \ref{compact UME}. We prove $\mathrm{(ii)}$. In fact, if $\int_{[1,\infty)}t^{-1}d\mu(t)<1$, then, by (\ref{equation3}), we arrive at the following estimate
\begin{equation*}
\bigg\| \frac{1}{n}\sum_{k=1}^n \mathcal{H}_\mu^k \bigg\| \leq \frac{1}{n}\sum_{k=1}^n \| \mathcal{H}_\mu^k \|
=\frac{1}{n}\sum_{k=1}^n \| \mathcal{H}_\mu \|^k=\frac{1}{n}\frac{\| \mathcal{H}_\mu \|(1-\| \mathcal{H}_\mu \|^n)}{1-\| \mathcal{H}_\mu \|}\to0
\end{equation*}
as $n\to\infty$. Hence $\mathcal{H}_\mu$ is uniformly mean ergodic. The proof is complete.
\end{proof}

\section{Further comments}\label{section5}

In this section, we discuss some problems closely related to the results presented in this paper, and pose a conjecture. In Section \ref{section4}, we studied the power bounded and uniformly mean ergodic condition for Hausdorff operator. The characterization of dynamics of such operator is also an important related topic, which remains open (see \cite{BM} for the theory of linear operator dynamics). It would be interesting to explore the dynamical properties of Hausdorff operator.

In Theorem \ref{bounded UME}, we show that for $\int_{[1,\infty)} t^{-1} d\mu(t)<1$, Hausdorff operators on weighted mixed norm Fock spaces are uniformly mean ergodic. On the other hand, condition $\int_{[1,\infty)} t^{-1} d\mu(t)=1$ is necessary for the uniformly mean ergodicity of Hausdorff operators. It is thus natural to ask, if
\begin{equation*}
\int_{[1,\infty)} \frac{1}{t} d\mu(t)=1,
\end{equation*}
is Hausdorff operator $\mathcal{H}_\mu$ uniformly mean ergodic?  By \cite[Theorem 2.1]{Krengel}, this question is equivalent to whether the range of $I-\mathcal{H}_\mu$, denoted as $\mathrm{Im}(I-\mathcal{H}_\mu)$, is closed in $F_\phi^{p,q}$, where
\begin{equation*}
(I-\mathcal{H}_\mu)f(z)=\int_{[1,\infty)}\frac{1}{t} \bigg[f(z)-f\bigg(\frac{z}{t}\bigg)\bigg] d\mu(t).
\end{equation*}
We know that for Hardy operator $H$, $\text{Im}(I-H)$ is closed \cite[Corollary 4.2]{Beltran}. On the other hand, note that $\text{Ker}(I-\mathcal{H}_\mu)$ contains only the constant functions. In fact, for any non constant function $f(z)=\sum_{n=0}^\infty a_nz^n\in F_\phi^{p,q}$, there exists a certain term $a_n\neq0$ such that $(I-\mathcal{H}_\mu)f\neq0$. Therefore, it is natural to pose the following conjecture.

\begin{conjecture}
Let $\mu$ be a positive Borel measure on $(0,\infty)$ satisfying $\mu(0,1)=0$, $1\leq p,q\leq\infty$, and $\phi$ be a weight. If $\int_{[1,\infty)} t^{-1} d\mu(t)=1$, then $\mathrm{Im}(I-\mathcal{H}_\mu)$ is the closed subspace of $F_\phi^{p,q}$ given by
\begin{equation*}
\mathrm{Im}(I-\mathcal{H}_\mu)=\{f\in F_\phi^{p,q}: f(0)=0\}.
\end{equation*}
\end{conjecture}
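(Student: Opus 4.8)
The plan is to establish the two inclusions of the claimed identity $\mathrm{Im}(I-\mathcal{H}_\mu)=N$, where $N:=\{f\in F_\phi^{p,q}:f(0)=0\}$, the nontrivial direction being powered by a strict contraction estimate for $\mathcal{H}_\mu$ on $N$. Throughout, $\mathcal{H}_\mu$ is bounded by Theorem~\ref{boundedness}, since $\mu(0,1)=0$ and $\int_{[1,\infty)}t^{-1}d\mu=1<\infty$.

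The inclusion $\mathrm{Im}(I-\mathcal{H}_\mu)\subseteq N$ is immediate: because $\mu_0=\int_{[1,\infty)}t^{-1}d\mu=1$, one has $\mathcal{H}_\mu f(0)=\mu_0 f(0)=f(0)$, so $(I-\mathcal{H}_\mu)f$ vanishes at the origin. The same identity shows $\mathcal{H}_\mu(N)\subseteq N$, and since point evaluation at $0$ is continuous on $F_\phi^{p,q}$, the subspace $N$ is closed, hence a Banach space on which $\mathcal{H}_\mu$ restricts to a bounded operator.

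For the reverse inclusion I would prove that $\mathcal{H}_\mu|_N$ is a strict contraction. The naive Minkowski bound only gives $\|\mathcal{H}_\mu|_N\|\le\mu_0=1$---exactly the borderline at which the global Neumann series fails---so the key is to exploit the vanishing at $0$. Writing $g\in N$ as $g(z)=zh(z)$ with $h$ entire, one has
$$\mathcal{H}_\mu g(z)=z\int_{[1,\infty)}\frac{1}{t^2}h\bigg(\frac{z}{t}\bigg)d\mu(t),$$
and therefore, on $|z|=r$, Minkowski's integral inequality together with the monotonicity of $M_p(h,\cdot)$ yields
$$M_p(\mathcal{H}_\mu g,r)\le r\int_{[1,\infty)}\frac{1}{t^2}M_p\bigg(h,\frac{r}{t}\bigg)d\mu(t)\le r\mu_1 M_p(h,r)=\mu_1 M_p(g,r).$$
Integrating against $re^{-q\phi(r)}dr$ (or taking the supremum when $q=\infty$) gives $\|\mathcal{H}_\mu g\|_{p,q,\phi}\le\mu_1\|g\|_{p,q,\phi}$. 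The decisive gain is that the factor $z$ produces an extra $t^{-1}$, replacing the critical mass $\mu_0=1$ by $\mu_1=\int_{[1,\infty)}t^{-2}d\mu$; and since $\mu_0-\mu_1=\int_{(1,\infty)}t^{-1}(1-t^{-1})d\mu>0$ whenever $\mu$ is not the unit point mass at $1$, one obtains $\mu_1<1$.

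Granting $\|\mathcal{H}_\mu|_N\|\le\mu_1<1$, the operator $(I-\mathcal{H}_\mu)|_N$ is invertible on $N$ through the Neumann series $\sum_{k\ge0}(\mathcal{H}_\mu|_N)^k$, so $(I-\mathcal{H}_\mu)(N)=N$. As every $f$ differs from $f-f(0)\in N$ by a constant, which lies in $\mathrm{Ker}(I-\mathcal{H}_\mu)$, we conclude $\mathrm{Im}(I-\mathcal{H}_\mu)=(I-\mathcal{H}_\mu)(N)=N$; in particular the range is closed, and uniform mean ergodicity follows from Krengel's criterion (\cite[Theorem~2.1]{Krengel}). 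I expect the strict-contraction estimate to be the main obstacle, since the global operator has norm $1$ and no contraction argument applies to it---everything hinges on extracting the factor $t^{-1}$ from the condition $f(0)=0$. A point that must be flagged is the degenerate case $\mu=\delta_1$, for which $\mathcal{H}_\mu=I$ and the conclusion is false; this is consistent with the preceding kernel computation (which tacitly assumes $\mathcal{H}_\mu\neq I$), so I would add the non-degeneracy hypothesis $\mu((1,\infty))>0$, equivalently $\mu_1<1$.
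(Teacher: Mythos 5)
This statement is posed in the paper as an open conjecture; the paper offers no proof of it (it only observes, via \cite[Theorem 2.1]{Krengel}, that the question is equivalent to uniform mean ergodicity of $\mathcal{H}_\mu$ in the critical case, and makes a remark about the kernel). So there is nothing to compare your argument against; instead I have checked it on its own terms, and I believe it is correct and essentially settles the conjecture, subject to the amendment you yourself flag. First, your counterexample is genuine: for $\mu=\delta_1$ all hypotheses hold, yet $\mathcal{H}_\mu=I$ and $\mathrm{Im}(I-\mathcal{H}_\mu)=\{0\}\neq\{f:f(0)=0\}$, so the conjecture as literally stated is false; note that the paper's preceding claim that $\mathrm{Ker}(I-\mathcal{H}_\mu)$ consists only of constants silently fails in exactly the same case (it needs $\mu_n\neq1$ for $n\geq1$, which fails precisely for $\delta_1$). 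Under the conjecture's hypotheses, your added condition $\mu((1,\infty))>0$ is equivalent to $\mu\neq\delta_1$ and to $\mu_1<1$, and is the right repair. Second, the core mechanism is sound: for $g(z)=zh(z)$ with $g(0)=0$, the identity $\mathcal{H}_\mu g(z)=z\int_{[1,\infty)}t^{-2}h(z/t)\,d\mu(t)$ holds, $M_p(g,r)=rM_p(h,r)$, and Minkowski's inequality plus monotonicity of $M_p(h,\cdot)$ give $M_p(\mathcal{H}_\mu g,r)\leq\mu_1M_p(g,r)$, hence $\|\mathcal{H}_\mu|_N\|\leq\mu_1<1$ after integrating against $re^{-q\phi(r)}dr$ (or taking suprema when $q=\infty$). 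Since $N=\{f:f(0)=0\}$ is closed (point evaluations are bounded, as the paper itself uses in Section \ref{section3}) and $\mathcal{H}_\mu$-invariant, the Neumann series inverts $(I-\mathcal{H}_\mu)|_N$, so $(I-\mathcal{H}_\mu)(N)=N$; combined with $\mathcal{H}_\mu f(0)=\mu_0f(0)=f(0)$, which gives both $\mathrm{Im}(I-\mathcal{H}_\mu)\subseteq N$ and that constants lie in the kernel, the claimed identity follows. Your argument in fact yields a bit more: injectivity of $(I-\mathcal{H}_\mu)|_N$ recovers the paper's kernel claim (the kernel is exactly the constants when $\mu\neq\delta_1$), and together with $\|\mathcal{H}_\mu^n\|=\|\mathcal{H}_\mu\|^n=1$ from (\ref{equation3}) and Krengel's criterion, it answers the paper's open question affirmatively in the borderline case $\int_{[1,\infty)}t^{-1}d\mu(t)=1$: $\mathcal{H}_\mu$ is then always uniformly mean ergodic, since either $\mu=\delta_1$ (where $\mathcal{H}_\mu=I$ is trivially so) or the range of $I-\mathcal{H}_\mu$ is closed by your contraction argument.
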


\end{document}